\numberwithin{equation}{section}
\def\pmod #1{\ ({\rm{mod}}\ #1)}
\theoremstyle{plain}
\newtheorem{theorem}{Theorem}
\newtheorem{lemma}{Lemma}
\newtheorem{corollary}{Corollary}
\newtheorem{proposition}{Proposition}
\newtheorem{conjecture}{Conjecture}
\theoremstyle{definition}
\patchcmd{\@settitle}{\uppercasenonmath\@title}{}{}{}
\patchcmd{\@setauthors}{\MakeUppercase}{}{}{}
\patchcmd{\section}{\scshape}{}{}{}
\begin{document}

\title
[{On a conjecture of Ram\'{\i}rez Alfons\'{\i}n and Ska{\l}ba II}]
{On a conjecture of Ram\'{\i}rez Alfons\'{\i}n and Ska{\l}ba II}

\author
[Y. Ding, \quad W. Zhai \quad {\it and} \quad  L. Zhao] 
{Yuchen Ding, \quad Wenguang Zhai \quad {\it and} \quad Lilu Zhao}

\address{(Yuchen Ding) School of Mathematical Sciences,  Yangzhou University, Yangzhou 225002, People's Republic of China}
\email{ycding@yzu.edu.cn}
\address{(Wenguang Zhai) Department of Mathematics,  China University of Mining and Technology, Beijing 100083, People's Republic of China}
\email{zhaiwg@hotmail.com}
\address{(Lilu Zhao) School of Mathematics, Shandong University, Jinan 250100, People's Republic of China}
\email{zhaolilu@sdu.edu.cn}

\keywords{Frobenius--type problems, Hardy--Littlewood method, primes, Siegel--Walfisz theorem}
\subjclass[2010]{11N05, 11P55}

\begin{abstract}
Let $1<c<d$ be two relatively prime integers and $g_{c,d}=cd-c-d$.  We confirm, by employing the Hardy--Littlewood method, a 2020 conjecture of Ram\'{\i}rez Alfons\'{\i}n and Ska{\l}ba which states that
$$\#\left\{p\le g_{c,d}:p\in \mathcal{P}, ~p=cx+dy,~x,y\in \mathbb{Z}_{\geqslant0}\right\}\sim \frac{1}{2}\pi\left(g_{c,d}\right) \quad (\text{as}~c\rightarrow\infty),$$
where $\mathcal{P}$ is the set of primes, $\mathbb{Z}_{\geqslant0}$ is the set of nonnegative integers and $\pi(t)$ denotes the number of primes not exceeding $t$. 
\end{abstract}
\maketitle

\section{Introduction}
Let $1< c<d$ be two relatively prime integers and $g_{c,d}=cd-c-d$.  As early as 1882, Sylvester \cite{Sylvester} showed that $g_{c,d}$ is the largest integer which cannot be represented as the form 
$cx+dy~(x,y\in \mathbb{Z}_{\geqslant0}).$
Furthermore, he proved that for any $0\le s\le g_{c,d}$, exactly one of $s$ and $g_{c,d}-s$ can be written as the form 
$cx+dy~(x,y\in \mathbb{Z}_{\geqslant0})$.
As an immediate consequence, we know that exactly half of the integers between the interval $[0,g_{c,d}]$ can be written as the desired form. Actually, Sylvester's results are the first nontrivial case of the diophantine Frobenius problem \cite{RA}, which asks the largest integer $g_{c_1,...,c_n}$ not of the form
$$c_1x_1+\cdots+c_nx_n \quad (x_1,...,x_n\in \mathbb{Z}_{\geqslant0}),$$
provided that $c_1,...,c_n$ are positive integers with $\gcd(c_1,...,c_n)=1$. There are a huge number of literatures related to the diophantine Frobenius problem. For some of these results, see e.g. the excellent monograph \cite{RA} of Ram\'{\i}rez Alfons\'{\i}n.

Motivated by Sylvester's theorems, Ram\'{\i}rez Alfons\'{\i}n and Ska{\l}ba \cite{RS} considered the diophantine Frobenius problem in primes. Precisely, let 
$\pi_{c,d}$ be the number of primes not exceeding $g_{c,d}$ with the form 
$cx+dy~(x,y\in \mathbb{Z}_{\geqslant0})$. By a very enlightening argument, Ram\'{\i}rez Alfons\'{\i}n and Ska{\l}ba proved that for any $\varepsilon>0$, there is a constant $k(\varepsilon)>0$ such that
$$\pi_{c,d}\geqslant k(\varepsilon)\frac{g_{c,d}}{(\log g_{c,d})^{2+\varepsilon}}.$$
On observing the antisymmetry property of the integers with the form  $cx+dy~(x,y\in \mathbb{Z}_{\geqslant0})$ found by Sylvester, they naturally posed the following conjecture. 

\begin{conjecture}[Ram\'{\i}rez Alfons\'{\i}n and Ska{\l}ba]\label{conjecture1}
Let $1< c<d$ be two relatively prime integers, then $$\pi_{c,d}\sim\frac{\pi(g_{c,d})}{2}\quad (\text{as}~c\rightarrow\infty),$$
where $\pi(t)$ is the number of primes up to $t$.
\end{conjecture}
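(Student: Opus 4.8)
The plan is to convert $\pi_{c,d}$ into a statement about the distribution of a linear sequence modulo one and then estimate it by a two-part argument, one part elementary and one part via the Hardy--Littlewood method. Write $\bar c$ for the inverse of $c$ modulo $d$ and $e(t)=e^{2\pi it}$. Every integer $0\le n\le g_{c,d}$ has at most one representation $n=cx+dy$ with $x,y\ge 0$, and it has one precisely when the unique $x\in\{0,1,\dots,d-1\}$ with $cx\equiv n\pmod d$ satisfies $cx\le n$; since that $x$ equals $d\{\bar cn/d\}$ and $n/(cd)<1$, this says $\{\bar c n/d\}\le n/(cd)$. Hence
$$\pi_{c,d}=\#\Big\{p\le g_{c,d}:\ p\in\mathcal{P},\ \{\bar c p/d\}\le \tfrac{p}{cd}\Big\},$$
and, grouping the representable integers by the value of $y$, equivalently $\pi_{c,d}=\sum_{y=0}^{c-2}\#\{p\in\mathcal{P}:\ dy\le p\le g_{c,d},\ p\equiv dy\pmod c\}$. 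The heuristic is that $\bar c p/d$ equidistributes over primes, making the first event have ``probability'' $p/(cd)$, and $\sum_{p\le g_{c,d}}p/(cd)\sim g_{c,d}^2/(2cd\log g_{c,d})\sim\tfrac12\pi(g_{c,d})$ by the prime number theorem (using $g_{c,d}\sim cd$ as $c\to\infty$). I would split according to whether $c\le(\log g_{c,d})^A$ or $c>(\log g_{c,d})^A$, for a large fixed $A$.

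In the regime $c\le(\log g_{c,d})^A$ I would work from the second formula and use the Siegel--Walfisz theorem with the small modulus $c$. Since $\log d\sim\log g_{c,d}$ here, each class $dy$ coprime to $c$ contributes $\#\{p\in\mathcal{P}:dy\le p\le g_{c,d},\ p\equiv dy\}=(\mathrm{Li}(g_{c,d})-\mathrm{Li}(dy))/\phi(c)+O(g_{c,d}\exp(-c_1\sqrt{\log g_{c,d}}))$, the classes not coprime to $c$ contribute $O(\omega(c))$ primes in all, and the accumulated Siegel--Walfisz error stays $\ll(\log g_{c,d})^A g_{c,d}\exp(-c_1\sqrt{\log g_{c,d}})=o(\pi(g_{c,d}))$. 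The main term is $\frac1{\phi(c)}\sum_{(y,c)=1,\,y\le c-2}(\mathrm{Li}(g_{c,d})-\mathrm{Li}(dy))$; removing the coprimality condition by M\"obius inversion (with error $O(\tau(c)\pi(g_{c,d})/\phi(c))=o(\pi(g_{c,d}))$) gives $\mathrm{Li}(g_{c,d})(1+o(1))-\frac1c\sum_{y\le c-2}\mathrm{Li}(dy)$. Comparing the last sum with $\frac1{cd}\int_0^{cd}\mathrm{Li}(u)\,du$ and using $\int_0^{M}\mathrm{Li}(u)\,du=M\,\mathrm{Li}(M)-\mathrm{li}(M^2)+O(1)\sim M^2/(2\log M)$ with $M=cd$ shows $\frac1c\sum_{y\le c-2}\mathrm{Li}(dy)\sim\tfrac12\pi(g_{c,d})$, whence $\pi_{c,d}\sim\pi(g_{c,d})-\tfrac12\pi(g_{c,d})=\tfrac12\pi(g_{c,d})$.

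In the regime $c>(\log g_{c,d})^A$ (so both $c,d\to\infty$, and since $g_{c,d}\asymp cd$ also $d\ll g_{c,d}(\log g_{c,d})^{-A}$ and $c\ll g_{c,d}^{1/2}$) I would run the circle method on the first formula. Using Vaaler's trigonometric approximation of degree $H=(\log g_{c,d})^B$ to $\mathbf 1_{[0,\beta]}$ and the exact identity $\bar c/d-1/(cd)=m/c$, where $m$ is defined by $c\bar c-1=md$ (one checks $\gcd(m,c)=1$, while $\gcd(\bar c,d)=1$ is automatic), one obtains
$$\pi_{c,d}=\sum_{p\le g_{c,d}}\frac{p}{cd}\;+\;\sum_{1\le|h|\le H}\widehat{c}_h\big(S(h\bar c/d)-S(hm/c)\big)\;+\;(\text{Vaaler error}),$$
where $S(\gamma)=\sum_{p\le g_{c,d}}e(\gamma p)$ and $|\widehat{c}_h|\ll 1/|h|$. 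The first sum is the main term $\sim\tfrac12\pi(g_{c,d})$. For the rest, the fraction $h\bar c/d$ has denominator $q=d/\gcd(h,d)$, and since $\gcd(h,d)\le h\le H<d$ we always have $(\log g_{c,d})^{A-B}<q\le d\ll g_{c,d}(\log g_{c,d})^{-A}$; likewise $hm/c$ has denominator $q'=c/\gcd(h,c)$ with $(\log g_{c,d})^{A-B}<q'\le c\ll g_{c,d}^{1/2}$. Thus every exponential sum over primes that appears sits squarely in the Vinogradov minor-arc range, and Vinogradov's bound (in the form obtainable from Vaughan's identity), $S(a/q)\ll(g_{c,d}q^{-1/2}+g_{c,d}^{4/5}+(g_{c,d}q)^{1/2})(\log g_{c,d})^{O(1)}$, gives $S(a/q)\ll g_{c,d}(\log g_{c,d})^{-D}$ for a large $D$ throughout this range once $A$ is taken large enough relative to $B$. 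Summing these against $\widehat{c}_h$, and likewise treating the Vaaler error (whose only large piece is the harmless $\pi(g_{c,d})/H$ coming from the $h=0$ term of the Fej\'er kernel), bounds the whole error by $o(\pi(g_{c,d}))$. The two regimes together give the conjecture.

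The step I expect to be the main obstacle is precisely the uniformity in the pair $(c,d)$: the exponential sums $S(a/q)$ are resistant exactly when $q$ is ``medium'' or genuinely large, and no single estimate covers every $q$ that a careless approach would generate --- in particular, if $c$ were small the circle method would produce sums modulo $q\approx d\approx g_{c,d}$, where Vinogradov's bound is worthless. The two-regime split is what defuses this: for $c$ small the problem linearises into primes in progressions to the small modulus $c$, and for $c$ large the deliberately short length $H=(\log g_{c,d})^B$ of the Fourier expansion confines all the relevant moduli to the window $(\log g_{c,d})^{A-B}<q\ll g_{c,d}(\log g_{c,d})^{-A}$ on which Vinogradov's estimate saves a power of the logarithm. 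What remains is the lengthy but routine bookkeeping of the Vaaler smoothing terms and the verification of the main-term asymptotics in both regimes.
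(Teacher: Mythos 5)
Your proposal is essentially correct, and in its main (large-$c$) regime it takes a genuinely different route from the paper. The paper applies the circle method to the weighted count $\psi_{c,d}=\int_0^1 f(\alpha)h(-\alpha)\,d\alpha$ with $h(\alpha)=\sum_{0\le x\le d,\,0\le y\le c}e(\alpha(cx+dy))$: its two key inputs are the $L^1$-bound $\int_0^1|h(-\alpha)|\,d\alpha\ll(\log g)^2$ (which makes the minor arcs and the tails of the $q=1$ arc negligible) and Sylvester's theorem that exactly half of $[0,g]$ is representable, which produces the main term $g/2$ from the $q=1$ arc alone, the arcs with $2\le q\le Q$ being killed by the bound $h(-a/q-\theta)\ll qd$. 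You instead encode representability of $p$ as the fractional-part inequality $\{\bar c p/d\}\le p/(cd)$, use the sawtooth identity together with $\bar c/d-1/(cd)=m/c$ and a Vaaler/Erd\H{o}s--Tur\'an expansion of length $H=(\log g)^B$, and observe that every resulting rational $h\bar c/d$, $hm/c$ has reduced denominator trapped in the window $((\log g)^{A-B}, g(\log g)^{-A}]$, where Vinogradov's bound saves an arbitrary power of $\log g$; the main term then comes from the linear term $\sum_{p\le g}p/(cd)\sim\frac12\pi(g)$ rather than from Sylvester's theorem, and no major-arc analysis or $L^1$-estimate for $h$ is needed. Both treatments handle small $c\le(\log g)^A$ identically in spirit, via Siegel--Walfisz to the modulus $c$ (the paper works with $\psi(dy;c,dy)$, you with $\mathrm{Li}$; the bookkeeping is equivalent). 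The remaining details on your side are routine but should be stated: pass from $S(\gamma)=\sum_{p\le g}e(\gamma p)$ to the $\Lambda$-weighted sum by partial summation before invoking Vinogradov's estimate (which requires $(a,q)=1$ and is stated for $\Lambda$); note that the boundary case $\{\bar c p/d\}=p/(cd)$ concerns at most the single prime $p=c$; and carry out the M\"obius/integral comparison for $\frac1c\sum_{y}\mathrm{Li}(dy)$, where $\tau(c)/\varphi(c)\to0$ ensures the coprimality removal is harmless. With $A$ chosen large compared with $B$ and the fixed logarithm powers lost in Vinogradov's bound and in the $\sum_{|h|\le H}|h|^{-1}$ summation, your error terms are indeed $o(\pi(g))$, so the argument goes through.
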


They gave some remarks below Conjecture \ref{conjecture1}. `{\it In the same spirit as the prime number theorem, this conjecture seems to be out of reach.}' Also, they mentioned that this conjecture has some difficulties sharing the same flavor of Linnik's problem related the minimal primes in arithmetic progressions.
Recently, the first named author made some progress on Conjecture \ref{conjecture1}. 
For real number $N\geqslant2$, let $1<c<d$ be two relatively prime integers satisfying $cd\leqslant N$. The first named author \cite{Ding} proved that for all but at most $$O\left(N(\log N)^{1/2}(\log\log N)^{1/2+\varepsilon}\right)$$ pairs $c$ and $d$, we have
\begin{align*}
\pi_{c,d}=\frac{\pi(g_{c,d})}{2}+O\left(\frac{\pi(g_{c,d})}{(\log\log (cd))^{\varepsilon}}\right).
\end{align*}
Since $$\frac{\pi(g_{c,d})}{2}+O\left(\frac{\pi(g_{c,d})}{(\log\log (cd))^{\varepsilon}}\right)\sim\frac{\pi(g_{c,d})}{2} \quad (\text{as}~c\rightarrow\infty)$$
and the total number of the relatively prime pairs $c,d$ with $1<c<d$ and $cd\leqslant N$ is $\gg N\log N$. Thus, the first named author actually showed that Conjecture \ref{conjecture1} is true for almost all $c$ and $d$.

It is, however, rather surprising that the complete proof of Conjecture \ref{conjecture1} follows from an application of the classical Hardy--Littlewood method. Perhaps, a novel point in our argument is that only the first coefficient of the `singular series' contributes the main term of the asymptotic formula comparing with the usual applications of the Hardy--Littlewood method. The idea of the proof presented here is in the same spirit as the one developed by a very recent article of the third named author and Chen, Yang \cite{CYZ}.

Now, let's record our result as the following theorem.

\begin{theorem}\label{thm1}
Suppose that $d>c$ are two relatively prime integers with $c$ sufficiently large, then we have
$$\pi_{c,d}\sim \frac12\pi(g_{c,d}), \quad \text{as~}c\rightarrow\infty.$$
\end{theorem}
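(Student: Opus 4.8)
Write $N=g_{c,d}$ and $r(n)=\#\{(x,y)\in\mathbb{Z}_{\ge 0}^2:cx+dy=n\}$, so that $\pi_{c,d}=\sum_{p\le N}r(p)$ and, by Sylvester's theorem, $r(n)\in\{0,1\}$ for $0\le n\le N$. The plan is to evaluate the weighted sum $\Psi(N):=\sum_{n\le N}\Lambda(n)r(n)$ by the Hardy--Littlewood method and then recover $\pi_{c,d}$ by Abel summation together with the trivial bound $\Psi(t)\le\psi(t)\ll t$. With $e(\theta)=e^{2\pi i\theta}$, $M_c=\lfloor N/c\rfloor$, $M_d=\lfloor N/d\rfloor$, and
$$F_c(\alpha)=\sum_{0\le x\le M_c}e(cx\alpha),\qquad F_d(\alpha)=\sum_{0\le y\le M_d}e(dy\alpha),$$
the identity $r(n)=\int_0^1 F_c(\alpha)F_d(\alpha)e(-n\alpha)\,d\alpha$ (the truncations of $x,y$ cost nothing since $n\le N$) gives $\Psi(N)=\int_0^1 F_c(\alpha)F_d(\alpha)T(\alpha)\,d\alpha$ with $T(\alpha)=\sum_{n\le N}\Lambda(n)e(-n\alpha)$. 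Fix $Q=(\log N)^{C}$ for a large constant $C$; let $\mathfrak{M}(q,a)$ be the arc $|\alpha-a/q|\le Q/N$ for $1\le q\le Q$, $\gcd(a,q)=1$, let $\mathfrak{M}$ be their (disjoint) union, and $\mathfrak{m}$ the complement. Throughout I use that $c\to\infty$ together with $cd=N+c+d$ and $c\le d$ forces $c\ll\sqrt N$, $d=(N+c)/(c-1)\sim N/c$, and $c+d=o(N)$; in particular $M_c+1<d$, $M_d+1<c$, and $d\gg\sqrt N$.

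The structural heart of the argument — and the reason only the $q=1$ term of the singular series contributes — is that $\gcd(c,d)=1$. On $\mathfrak{M}(q,a)$ a routine computation gives
$$F_c(a/q+\beta)=\frac{\eta_c(q,a)}{q}\cdot\frac1c\int_0^{cM_c}e(s\beta)\,ds+O(q),\qquad \eta_c(q,a)=\sum_{s\bmod q}e(csa/q)=\begin{cases}q,& q\mid c,\\0,&q\nmid c,\end{cases}$$
and similarly for $F_d$; since no $q\ge 2$ divides both $c$ and $d$, at least one of $F_c,F_d$ is $O(q)$ on each arc with $q\ge 2$. Feeding this together with the Siegel--Walfisz theorem (legitimate since $q\le(\log N)^C$) and the bounds $\|F_c\|_\infty\ll N/c$, $\|F_d\|_\infty\ll N/d$, $d\gg\sqrt N$, a crude estimate shows that all major arcs with $q\ge 2$ contribute $O(Q^{O(1)}\sqrt N)=o(N/\log N)$.

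For the minor arcs the essential new point is again coprimality, now in the guise of a mean value: since the spikes of $|F_c|$ sit near $\tfrac1c\mathbb{Z}$ and those of $|F_d|$ near $\tfrac1d\mathbb{Z}$, counting integer pairs $(u,v)$ with $|ud-vc|$ small gives
$$\operatorname{meas}\bigl\{\alpha\in[0,1):|F_c(\alpha)|>U,\ |F_d(\alpha)|>V\bigr\}\ll \frac{1}{UV}\qquad(1\le U\le M_c+1,\ 1\le V\le M_d+1),$$
where $M_c+1<d$ and $M_d+1<c$ are used to exclude degenerate configurations. A dyadic decomposition in the sizes of $|F_c(\alpha)|$ and $|F_d(\alpha)|$ then collapses to $\int_{\mathfrak{m}}|F_cF_dT|\ll(\log N)^2\sup_{\mathfrak{m}}|T|$, and Vinogradov's estimate $\sup_{\mathfrak{m}}|T|\ll N(\log N)^{4}Q^{-1/2}$ (valid since $Q<q\le N/Q$ on $\mathfrak{m}$) makes this $o(N/\log N)$ once $C$ is large enough. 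I expect this minor-arc bound to be the main obstacle, since the naive estimates $\|F_cF_d\|_1,\|F_cF_d\|_2\ll\sqrt N$ are far too lossy, and the dyadic measure bound above is precisely what replaces them.

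It remains to evaluate the contribution of $\mathfrak{M}(1,0)=\{|\beta|\le Q/N\}$, which supplies the main term. On this arc $c|\beta|\le cQ/N=o(1)$, so one may replace $T(\beta)$, $F_c(\beta)$, $F_d(\beta)$ by $\int_0^N e(-s\beta)\,ds$, $\tfrac1c\int_0^{cM_c}e(s\beta)\,ds$, $\tfrac1d\int_0^{dM_d}e(s\beta)\,ds$; the errors integrate to $o(N/\log N)$ thanks to the sharp local bounds $\int_{\mathfrak{M}(1,0)}|F_cF_d|\ll 1$, $\int_{\mathfrak{M}(1,0)}|F_c|\ll(\log N)/c$, $\int_{\mathfrak{M}(1,0)}|F_d|\ll(\log N)/d$, together with the Siegel--Walfisz and Euler--Maclaurin error terms. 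Completing the resulting integral to $\mathbb{R}$ (the tail is $\ll N/Q^2$) and applying Fourier inversion turns it into
$$\frac1{cd}\operatorname{meas}\bigl\{(s_1,s_2):0\le s_1\le cM_c,\ 0\le s_2\le dM_d,\ s_1+s_2\le N\bigr\}=\frac1{2cd}\bigl(N^2-(N\bmod c)^2-(N\bmod d)^2\bigr),$$
using $cM_c+dM_d>N$. Since $(N\bmod c)^2\ll c^2\ll N=o(N^2)$ and $(N\bmod d)^2\ll d^2\ll N^2/c^2=o(N^2)$, this equals $\tfrac{N^2}{2cd}(1+o(1))=\tfrac N2(1+o(1))$ because $cd=N+c+d=N(1+o(1))$. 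Hence $\Psi(N)=\tfrac N2(1+o(1))$, and Abel summation gives
$$\pi_{c,d}=\sum_{n\le N}\frac{\Lambda(n)r(n)}{\log n}+O(\sqrt N)=\frac{\Psi(N)}{\log N}+\int_2^N\frac{\Psi(t)}{t(\log t)^2}\,dt+O(\sqrt N)=\frac{N}{2\log N}(1+o(1)),$$
which by the prime number theorem equals $\tfrac12\pi(N)(1+o(1))=\tfrac12\pi(g_{c,d})(1+o(1))$, as required.
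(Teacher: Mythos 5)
Your overall skeleton matches the paper's: the same weighted sum, the same choice $Q=(\log N)^{O(1)}$, Vinogradov's bound on the minor arcs combined with an $L^1$-bound $\int_0^1|F_cF_d|\ll(\log N)^2$ (your dyadic measure estimate is exactly equivalent to the paper's key Lemma on $\int_0^1|h|$, proved there by splitting $[0,1]$ into intervals of length $1/(cd)$ and using $(c,d)=1$), and the main term from the $q=1$ arc. Your evaluation of the $q=1$ arc by passing to continuous integrals and a measure computation is a legitimate alternative to the paper's cleaner device of completing the integral to the whole circle and invoking Sylvester's exact count $\tfrac{g+1}{2}$, and your final Abel summation step is the same as the paper's.

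However, there is a genuine gap in your treatment of the major arcs with $q\ge 2$. Coprimality only guarantees that \emph{one} of $F_c,F_d$ is $O(q)$ on such an arc, and your bound $|F_cF_d|\ll Q\sqrt N$ uses $\|F_d\|_\infty\ll N/d\ll\sqrt N$, i.e.\ it implicitly assumes the small factor is always $F_c$ (the case $q\nmid c$). When $q\mid c$ (which certainly occurs, e.g.\ $c$ even and $q=2$), the roles are reversed: $F_d(a/q+\beta)\ll q$ but $F_c(a/q+\beta)=F_c(\beta)$ has sup norm $\asymp N/c$, and since your only constraint is $c\ll\sqrt N$, one has $N/c\gg\sqrt N$; the crude estimate then yields only $O\bigl(Q^{O(1)}N/c\bigr)$ for these arcs, not $O(Q^{O(1)}\sqrt N)$. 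This is harmless when $c$ exceeds a suitable power of $\log N$, but it is \emph{not} $o(N/\log N)$ — indeed not even obviously $o(N)$ — when, say, $c\asymp\log N$ and $c$ is even, and the contribution of such arcs is genuinely of size $\asymp N/c$ rather than an artifact of crude bounding. This is precisely why the paper proves its major-arc proposition with error term $dQ^3\asymp Q^3N/c$, concludes via the circle method only for $c\ge(\log g)^{43}$, and disposes of the complementary range $c\le(\log g)^{43}$ by an entirely separate argument: the identity $\psi_{c,d}=\psi(g)-\sum_{y\le c,\,(y,c)=1}\psi(dy;c,dy)+O(1)$ together with the Siegel--Walfisz theorem (available exactly because $c$ is then of polylogarithmic size in $d$). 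Your proposal contains no such case split or substitute analysis of the $q\mid c$ arcs, so as written the small-$c$ regime is not covered; adding the paper's second argument (or a genuinely finer evaluation of the $q\mid c$ arcs exhibiting the needed cancellation) is required to close the proof.
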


As usual, we shall firstly investigate the following weighted form related to Conjecture \ref{conjecture1}, i.e.,
\begin{align*}
\psi_{c,d} = \sum_{\substack{n\le g\\ n=cx+dy\\ x,y\in \mathbb{Z}_{\geqslant0}}}\Lambda (n),
\end{align*}
where the von Mangoldt function $\Lambda(n)$ is defined to be
\begin{equation*}
\Lambda(n)=
\begin{cases}
\log p, & \text{if } n=p^{\alpha}~(\alpha>0);\\
0, & \text{otherwise}.
\end{cases}
\end{equation*}

Theorem \ref{conjecture1} will be proved via the following weighted formula by a fairly standard transition. 

\begin{theorem}\label{thm2}
Suppose that $d>c$ are two relatively prime integers with $c$ sufficiently large, then we have
$$\psi_{c,d}\sim \frac{g_{c,d}}{2}, \quad \text{as~}c\rightarrow\infty.$$
\end{theorem}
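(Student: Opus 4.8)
The plan is to prove Theorem~\ref{thm2} --- whence Theorem~\ref{thm1} by the standard transition indicated above --- via the Hardy--Littlewood circle method. The first step is an exact integral identity. Write $g=g_{c,d}$, $e(t)=e^{2\pi it}$, and put
\[
f_c(\alpha)=\sum_{0\le x\le g/c}e(cx\alpha),\qquad f_d(\alpha)=\sum_{0\le y\le g/d}e(dy\alpha),\qquad S(\alpha)=\sum_{n\le g}\Lambda(n)e(n\alpha).
\]
By Sylvester's theorem $g$ is the Frobenius number of $c,d$, so for $0\le n\le g$ there is at most one pair $x,y\in\mathbb{Z}_{\geqslant0}$ with $cx+dy=n$, and any such pair automatically satisfies $x\le g/c$, $y\le g/d$; hence for $0\le n\le g$ the number of representations of $n$ equals $\int_0^1 f_c(\alpha)f_d(\alpha)e(-n\alpha)\,d\alpha$. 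Multiplying by $\Lambda(n)$ and summing over $n\le g$ gives
\[
\psi_{c,d}=\int_0^1 f_c(\alpha)f_d(\alpha)\,\overline{S(\alpha)}\,d\alpha .
\]

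The second step extracts the main term from a single major arc at the origin. Fix a large constant $B$, put $Q=(\log g)^{B}$ and $\mathfrak M=\{\alpha\in[0,1):\|\alpha\|\le Q/g\}$, where $\|\cdot\|$ is the distance to the nearest integer. On $\mathfrak M$ one has, by summing geometric series, $f_c(\alpha)=\dfrac{e(g\alpha)-1}{2\pi ic\alpha}+O(1)$ and $f_d(\alpha)=\dfrac{e(g\alpha)-1}{2\pi id\alpha}+O(1)$, and by the prime number theorem with classical error term (i.e.\ Siegel--Walfisz for the trivial modulus) $S(\alpha)=\dfrac{e(g\alpha)-1}{2\pi i\alpha}+O\!\big(Qg\exp(-c_0\sqrt{\log g})\big)$. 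Substituting these and using $|f_c(\alpha)|\ll\min(g/c,\|c\alpha\|^{-1})$, $|f_d(\alpha)|\ll\min(g/d,\|d\alpha\|^{-1})$, $|S(\alpha)|\ll\min(g,\|\alpha\|^{-1})$, one checks that every term containing at least one error factor is $o(g)$ after integration over $\mathfrak M$, while the product of the three main terms, after the substitution $u=\pi g\alpha$ and letting $Q\to\infty$, equals
\[
\frac{g^{2}}{\pi cd}\int_{-\infty}^{\infty}\frac{e^{iu}\sin^{3}u}{u^{3}}\,du+o(g)=\frac{g^{2}}{2cd}+o(g),
\]
the evaluation $\int_{-\infty}^{\infty}\frac{e^{iu}\sin^{3}u}{u^{3}}\,du=\dfrac{\pi}{2}$ coming from Fourier inversion applied to the threefold convolution of $\tfrac12\mathbf{1}_{[-1,1]}$, whose value at the point $1$ equals $\tfrac14$. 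Since $g/(cd)=1-1/c-1/d\to1$ as $c\to\infty$, the major arc contributes $\dfrac{g^{2}}{2cd}\,(1+o(1))\sim\dfrac g2$. This is precisely the phenomenon emphasised in the introduction: only the $q=1$ term of the singular series contributes a main term.

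It then remains to show $\int_{[0,1)\setminus\mathfrak M}|f_c(\alpha)f_d(\alpha)S(\alpha)|\,d\alpha=o(g)$, and here I would distinguish two cases according to the size of $c=\min(c,d)$. If $c\le(\log g)^{B}$, the circle method is not needed at all: writing $\psi_{c,d}=\sum_{0\le y\le g/d}\sum_{\substack{dy\le n\le g\\ n\equiv dy\pmod c}}\Lambda(n)$, applying Siegel--Walfisz with modulus $c$ to each of the at most $(\log g)^{B}$ inner sums and summing over $y$ yields $\psi_{c,d}=\frac{g^{2}}{2cd}+o(g)=\frac g2+o(g)$ directly (the sum $\sum_{y\le g/d,\ \gcd(y,c)=1}(g-dy)$ produces exactly the coefficient $\tfrac1{2cd}$ of $g^{2}$). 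If $c>(\log g)^{B}$, so that $d>c>(\log g)^{B}$ and $cd\sim g$, one runs the genuine minor-arc argument. The structural point is that $f_c(\alpha)$ exceeds $(\log g)^{B}$ only on arcs of length $\ll\big(c(\log g)^{B}\big)^{-1}$ about fractions whose reduced denominator divides $c$, and symmetrically $f_d$ about denominators dividing $d$; since $\gcd(c,d)=1$ these two families of arcs are disjoint apart from a neighbourhood of $0$ already absorbed into $\mathfrak M$. Off all these arcs $|f_c|,|f_d|\ll(\log g)^{B}$, and then the bound $\int_0^1|S|^{2}=\sum_{n\le g}\Lambda(n)^{2}\ll g\log g$ controls the contribution by $\ll(\log g)^{2B}\sqrt{g\log g}=o(g)$. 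On the arc about $j/c$ with reduced denominator $q$ one writes $S(j/c+\beta)$ as a prime sum twisted by $e(nj/c)$; for $q\le(\log g)^{B}$ the Siegel--Walfisz theorem, together with the Ramanujan-sum identity $\sum_{\gcd(r,q)=1}e(rj/q)=\mu(q)$, gives $\int_{\mathrm{arc}}|f_cS|\ll g/(c\phi(q))$ --- the factor $\phi(q)^{-1}$ being a genuine saving, not a new main term --- while for larger $q$ Vinogradov's estimate for $\sum_n\Lambda(n)e(n\theta)$ makes $S$ itself $\ll g(\log g)^{-2B}$ on the arc. Summing over $j$, the total is $\ll\frac gc\,\tau(c)=o(g)$, using $\sum_{j=0}^{c-1}\phi\big(c/\gcd(j,c)\big)^{-1}=\tau(c)$. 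The case where $f_d$ is large is symmetric, and a brief separate estimate (Siegel--Walfisz for $q=1$ plus the decay of $f_c,f_d$) covers the annulus $Q/g<\|\alpha\|\ll\big(c(\log g)^{B}\big)^{-1}$ should it protrude from $\mathfrak M$.

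The main obstacle is entirely in the last range $(\log g)^{B}<c<d$ with $cd\sim g$: here $c$ can be as large as $\asymp g^{1/2}$ and $d$ correspondingly as small as $\asymp g^{1/2}$ or as large as $\asymp g/(\log g)^{B}$, so the Dirichlet denominators that occur span the whole admissible interval, and one must knit together Siegel--Walfisz (small denominators), Vinogradov's prime exponential-sum bound (medium denominators) and the decay of the geometric sums $f_c,f_d$ near their peaks, keeping \emph{every} error strictly below $g$ --- the polynomial-in-$\log g$ losses from Brun--Titchmarsh and from Vinogradov's bound must be absorbed against $g/(cd)\sim1$, not against $g$. Turning the coprimality $\gcd(c,d)=1$ into the quantitative statement that the peaks of $f_c$ and of $f_d$ really are disjoint away from $0$, and checking that no $a/q$ with $q\ge2$ contributes to the main term, is where the bulk of the work lies; I expect the execution to follow the pattern of the article of the third named author with Chen and Yang \cite{CYZ}.
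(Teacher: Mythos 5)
Your overall architecture (the exact identity $\psi_{c,d}=\int_0^1 f_cf_d\overline S$, a single major arc at $0$, Siegel--Walfisz for small $c$) matches the paper's in spirit, and two of your three blocks are sound: the major-arc evaluation via $\int_{-\infty}^{\infty}e^{iu}\sin^3u/u^3\,du=\pi/2$, giving $g^2/(2cd)\sim g/2$, is a correct alternative to the paper's device of extending the $q=1$ integral to all of $[-1/2,1/2]$ and invoking Sylvester's exact count $(g+1)/2$; and the case $c\le(\log g)^{B}$ via Siegel--Walfisz is exactly how the paper finishes (its Eq.\ (5.2)--(5.3)). The gap is in the minor arcs, and it is located precisely at the structural claim you lean on: that the arcs where $|f_c|>(\log g)^{B}$ (radius $\asymp 1/(c(\log g)^{B})$ around fractions with denominator dividing $c$) and those where $|f_d|>(\log g)^{B}$ (radius $\asymp 1/(d(\log g)^{B})$ around denominators dividing $d$) are disjoint away from $0$ because $\gcd(c,d)=1$. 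Coprimality only gives $|j/c-m/d|\ge 1/(cd)$, and in the very regime where you invoke the claim ($d>c>(\log g)^{B}$) one has $1/(cd)<1/(c(\log g)^{B})$; taking $jd-mc=\pm1$ (Bézout) produces peaks of $f_d$ lying \emph{inside} peak arcs of $f_c$, far from $0$. On such overlaps $|f_cf_d|$ can be as large as $\asymp cd\asymp g$, and with only the trivial bound $|S|\le g$ a single such arc already contributes $\gg g\log g$; so these regions genuinely need a nontrivial bound on $S$ (the nearby reduced denominator is forced to be large, so Vinogradov applies), a case your decomposition declares empty. Since both your off-peak $L^2$ estimate and your on-peak bound $\int_{\mathrm{arc}}|f_cS|\ll g/(c\phi(q))$ silently use ``the other factor is $\ll(\log g)^{B}$'' (note also that this last bound omits the $f_d$ factor altogether), the minor-arc argument as written does not close.

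For comparison, the paper avoids the peak bookkeeping entirely: it puts \emph{all} $q\le Q$ into the major arcs, where coprimality is used only in the easy form that for $q\ge2$ one of $\Vert c\alpha\Vert,\Vert d\alpha\Vert\ge 1/(2q)$, whence $h\ll qd$ and the total is $\ll dQ^3$ (its Lemma 4.3); and the whole overlap analysis you are missing is packaged once and for all into the $L^1$ bound $\int_0^1\min\{d,\Vert c\alpha\Vert^{-1}\}\min\{c,\Vert d\alpha\Vert^{-1}\}\,d\alpha\ll(\log g)^2$ (its Lemma 3.2, proved by splitting $[0,1]$ into intervals of length $1/(cd)$ and using $\gcd(c,d)=1$ there), which combined with Vinogradov's sup bound for $S$ on the minor arcs gives Proposition 3.1. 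If you want to salvage your route, proving that $L^1$ estimate for $f_cf_d$ is the cleanest way to absorb the overlap regions; you would also need two separate scales (the paper takes $Q=(\log g)^{14}$ but switches to Siegel--Walfisz only below $(\log g)^{43}$), since using the same exponent $B$ for the peak cutoff and for the small-$c$ threshold leaves bounds like $g\tau(c)(\log g)^{B+1}/c$ that are not $o(g)$.
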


As an incident product of Theorem \ref{thm2}, we have the following corollary which seems to be of some interests.

\begin{corollary}\label{corollary1}
Suppose that $d>c$ are two relatively prime integers with $c$ sufficiently large, then we have
$$\sum_{\substack{y\leqslant c\\(y,c)=1}}\psi(dy;c,dy)\sim\frac{g_{c,d}}{2}, \quad \text{as~}c\rightarrow\infty,$$
where $$\psi(N;q,m)=\sum_{\substack{n\leqslant N\\ n\equiv m\!\!\pmod{q}}}\Lambda(n).$$ 
\end{corollary}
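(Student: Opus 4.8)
The plan is to deduce the corollary from Theorem~\ref{thm2} by an elementary re-sorting of the sum defining $\psi_{c,d}$, together with the prime number theorem and the Brun--Titchmarsh inequality. For $n\ge1$ let $y(n)$ be the unique element of $\{0,1,\dots,c-1\}$ with $dy(n)\equiv n\pmod{c}$. One checks easily that $n$ has a representation $n=cx+dy$ with $x,y\in\mathbb Z_{\ge0}$ if and only if $n\ge dy(n)$: when $n\ge dy(n)$ one takes $y=y(n)$ and $x=(n-dy(n))/c\in\mathbb Z_{\ge0}$, while from any representation $n=cx+dy$ one reduces $y$ modulo $c$ to get $n\ge d(y\bmod c)=dy(n)$. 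Since $\gcd(c,d)=1$, the residues $dy$ for $0\le y<c$ run over all of $\mathbb Z/c\mathbb Z$, so grouping the representable $n\le g$ according to $y(n)$ gives
\begin{align*}
\psi_{c,d}=\sum_{y=0}^{c-1}\ \sum_{\substack{dy\le n\le g\\ n\equiv dy\!\!\pmod{c}}}\Lambda(n).
\end{align*}
A short computation with $g=cd-c-d$ shows $dy\le g$ precisely for $0\le y\le c-2$, and for each such $y$ the inner sum equals $\psi(g;c,dy)-\psi(dy;c,dy)+\Lambda(dy)$, whereas $y=c-1$ contributes nothing.

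Next I would sum over $y$. The identity $\sum_{y=0}^{c-1}\psi(g;c,dy)=\psi(g)$ yields $\sum_{y=0}^{c-2}\psi(g;c,dy)=\psi(g)-\psi(g;c,d(c-1))$, and since $d(c-1)\equiv-d\pmod{c}$ with $(d,c)=1$ and $g/c\asymp d\to\infty$, Brun--Titchmarsh bounds $\psi(g;c,d(c-1))$ by $O(g/\varphi(c))=o(g)$ (the logarithmic factor is harmless because $\log(g/c)\gg\log g$). Also $\sum_{y=0}^{c-1}\Lambda(dy)$ vanishes unless $d$ is a prime power, in which case it is $O((\log cd)^2)=o(g)$. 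Substituting into the displayed identity and invoking the prime number theorem $\psi(g)=g+o(g)$ together with Theorem~\ref{thm2} $\psi_{c,d}=\tfrac12 g+o(g)$, one obtains
\begin{align*}
\sum_{y=0}^{c-2}\psi(dy;c,dy)=\psi(g)-\psi(g;c,d(c-1))-\psi_{c,d}+O\!\left((\log cd)^2\right)=\tfrac12 g+o(g).
\end{align*}

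It remains to replace the range $0\le y\le c-2$ by the reduced range $y\le c$, $(y,c)=1$, appearing in the corollary. The symmetric difference of the two index sets is $\{0\}\cup\{c-1\}\cup\{\,1\le y\le c-2:(y,c)>1\,\}$. The term $y=0$ equals $\psi(0;c,0)=0$; the term $y=c-1$ is again $o(g)$ by Brun--Titchmarsh; and when $(y,c)=\delta>1$ the congruence $n\equiv dy\pmod{c}$ forces $\delta\mid n$, so only prime powers $n=p^{\alpha}$ with $p\mid\delta\mid c$ contribute to $\psi(dy;c,dy)$, giving $\psi(dy;c,dy)\ll\omega(c)\log(cd)$ and a total contribution $\ll c\log c\,\log(cd)=o(g)$. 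Hence $\sum_{(y,c)=1,\ y\le c}\psi(dy;c,dy)=\sum_{y=0}^{c-2}\psi(dy;c,dy)+o(g)\sim\tfrac12 g$, which is the assertion.

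The only inputs beyond elementary bookkeeping are the prime number theorem and Brun--Titchmarsh; the latter is needed precisely because $c$ may be far larger than any fixed power of $\log g$, so Siegel--Walfisz is unavailable for the class $-d\bmod c$. The part requiring care is the parametrization of the representable integers and the handling of the exceptional class $y=c-1$, in which every integer up to $g$ happens to be non-representable; the remaining estimates are routine and use only $d\to\infty$ and $\varphi(c)\to\infty$.
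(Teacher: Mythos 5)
Your proof is correct and follows essentially the same route as the paper: the identity you obtain by sorting the representable $n$ according to the residue class $dy \pmod{c}$ is precisely Eq. (\ref{eq5-2}), which the paper has already established in the proof of Theorem \ref{thm2}, and the corollary then follows from Theorem \ref{thm2} together with the prime number theorem exactly as you conclude. The only difference is that you re-derive that identity from scratch, handling the boundary contributions (the class $y=c-1$ via Brun--Titchmarsh, the non-coprime $y$ and the $\Lambda(dy)$ terms via trivial bounds) explicitly, whereas the paper absorbs these into the error term of Eq. (\ref{eq5-2}).
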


\section{Outline of the proof: an application of the Hardy--Littlewood method}
We first fix some basic notations to be used frequently. From now on, we write $g$ instead of $g_{c,d}$ for brevity and $c$ is supposed to be sufficiently large. Let $Q$ denote a positive integer depending only on $g$ which shall be decided later. The function $e(t)$ is used to denote $e^{2\pi it}$ as usual. Define the major arcs to be

\begin{align}\label{eq2-1}
\mathfrak{M}(Q)=\bigcup_{1\le q\le Q}\bigcup_{\substack{1\le a\le q\\ (a,q)=1}}\left\{\alpha:\left|\alpha-\frac{a}{q}\right|\le \frac{Q}{qg}\right\}
\end{align}
We make a further provision that $Q< (g/2)^{1/3}$ so that the above subsets are pairwise disjoint. In fact, suppose that
$$\left\{\alpha:\left|\alpha-\frac{a}{q}\right|\le \frac{Q}{qg}\right\}\bigcap\left\{\alpha:\left|\alpha-\frac{a'}{q'}\right|\le \frac{Q}{qg}\right\}\neq \emptyset$$
for some $\frac{a}{q}\neq \frac{a'}{q'}$, then
$$\frac{2Q}{g}\ge \frac{2Q}{qg}\ge \left|\frac{a}{q}-\frac{a'}{q'}\right|\ge \frac{1}{Q^2},$$
which is certainly a contradiction with the provision that $Q< (g/2)^{1/3}$. In addition, we note that
$$\mathfrak{M}(Q)\subseteq \left[\frac{1}{Q}-\frac{Q}{qg}, 1+\frac{Q}{qg}\right] \subseteq\left[\frac{Q}{g}, 1+\frac{Q}{g}\right].$$
We can now define the minor arcs to be
\begin{align}\label{eq2-2}
\mathfrak{m}(Q)=\left[\frac{Q+1}{g}, 1+\frac{Q+1}{g}\right]\setminus \mathfrak{M}(Q).
\end{align}
For any real $\alpha$, let
$$f(\alpha)=\sum_{0\le n\le g}\Lambda(n)e(\alpha n) \quad \text{and} \quad h(\alpha)=\sum_{\substack{0\le x\le d\\ 0\le y\le c}}e(\alpha (cx+dy))$$
By the orthogonality relation, it is clear that
\begin{align}\label{eq2-3}
\psi_{c,d} = \int_{0}^{1}f(\alpha)h(-\alpha)d\alpha=\int_{\mathfrak{M}(Q)}f(\alpha)h(-\alpha)d\alpha+\int_{\mathfrak{m}(Q)}f(\alpha)h(-\alpha)d\alpha.
\end{align}

The remaining parts of our paper will be organized as follows: In the next section, we shall give a suitable bound of the integral on the minor arcs which shows that it contributes the error term of Theorem \ref{thm2}. 
For the integral on the major arcs, we have
\begin{align}\label{eq2-4}
\int_{\mathfrak{M}(Q)}f(\alpha)h(-\alpha)d\alpha&=\sum_{1\le q\le Q}\sum_{\substack{1\le a\le q\\ (a,q)=1}}\int_{\frac{a}{q}-\frac{Q}{qg}}^{\frac{a}{q}+\frac{Q}{qg}}f(\alpha)h(-\alpha)d\alpha \nonumber\\
&=\sum_{1\le q\le Q}\sum_{\substack{1\le a\le q\\ (a,q)=1}}\int_{|\theta|\le \frac{Q}{qg}}f\left(\frac{a}{q}+\theta\right)h\left(-\frac{a}{q}-\theta\right)d\theta.
\end{align}
We would prove, in section 4, that the integral for $q=1$ in Eq. (\ref{eq2-2}) contributes the main term of Theorem \ref{thm2} and the integrals for $2\le q\le Q$ only contribute the error terms. In section 5, we will prove our theorems and corollary.

\section{Estimates of the minor arcs}
The aim of this section is to prove the following proposition.
\begin{proposition}\label{proposition1} For estimates of the minor arcs, we have
\begin{align*}
\int_{\mathfrak{m}(Q)}f(\alpha)h(-\alpha)d\alpha\ll \frac{g(\log g)^6}{Q^{1/2}}+g^{4/5}(\log g)^6.
\end{align*}
\end{proposition}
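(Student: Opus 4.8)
The plan is to bound the minor-arc integral by the familiar route: an $L^\infty$ bound on $f(\alpha)$ times an $L^1$ (or Cauchy–Schwarz against $L^2$) bound on $h(\alpha)$. First I would record the trivial $L^1$ control on $h$. Since $h(\alpha)=\bigl(\sum_{0\le x\le d}e(\alpha c x)\bigr)\bigl(\sum_{0\le y\le c}e(\alpha d y)\bigr)$ factors as a product of two geometric sums, we have $|h(\alpha)|\ll \|\alpha c\|^{-1}\|\alpha d\|^{-1}$ (with the usual convention $\|t\|$ = distance to the nearest integer, and the bound interpreted as $\min(d,\|\alpha c\|^{-1})\cdot\min(c,\|\alpha d\|^{-1})$). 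Integrating this over a full period and using $\gcd(c,d)=1$ gives $\int_0^1 |h(\alpha)|\,d\alpha\ll (\log g)^2$ after changing variables, since $cd\asymp g$. That handles the ``width'' factor.

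Next I would get the pointwise bound on $f(\alpha)$ on $\mathfrak{m}(Q)$. This is exactly Vinogradov's estimate for exponential sums over primes (in the Vaughan-identity form): if $\alpha$ lies on the minor arcs, i.e. $|\alpha - a/q|\le 1/q^2$ with $Q< q\le g/Q$, then
\begin{align*}
f(\alpha)\ll \Bigl(\frac{g}{\sqrt q}+g^{4/5}+\sqrt{gq}\Bigr)(\log g)^4.
\end{align*}
The point of the choice of $Q$ and of the shape of the minor arcs in \eqref{eq2-2} is precisely that every $\alpha\in\mathfrak{m}(Q)$ admits such a rational approximation with $q$ in the range $(Q,\,g/Q]$; this follows from Dirichlet's approximation theorem together with the definition of $\mathfrak{M}(Q)$ (if the approximating $q$ were $\le Q$, then $\alpha$ would lie in a major arc). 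With $Q<(g/2)^{1/3}$ the term $\sqrt{gq}\le \sqrt{g\cdot g/Q}=g/\sqrt Q$ is dominated, so on $\mathfrak{m}(Q)$ we get $f(\alpha)\ll \bigl(g/\sqrt Q + g^{4/5}\bigr)(\log g)^4$.

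Finally I would combine the two:
\begin{align*}
\int_{\mathfrak{m}(Q)}f(\alpha)h(-\alpha)\,d\alpha
\ll \sup_{\alpha\in\mathfrak{m}(Q)}|f(\alpha)|\cdot \int_0^1|h(-\alpha)|\,d\alpha
\ll \Bigl(\frac{g}{\sqrt Q}+g^{4/5}\Bigr)(\log g)^4\cdot(\log g)^2,
\end{align*}
which is the claimed bound $g(\log g)^6 Q^{-1/2}+g^{4/5}(\log g)^6$. The main obstacle — or rather the only genuinely delicate point — is verifying that the minor arcs as defined really do force a Dirichlet denominator $q>Q$, i.e. that no $\alpha$ with a small-denominator approximation has been left out of $\mathfrak{M}(Q)$; this is where the interval endpoints $(Q+1)/g$ and the radius $Q/(qg)$ in the definition matter, and one has to check the approximation $q$ cannot sneak below $Q$ without $\alpha$ already being covered. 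Everything else is bookkeeping: the geometric-sum estimate for $h$, the standard Vaughan-identity bound for $f$, and the choice $Q<(g/2)^{1/3}$ to kill the $\sqrt{gq}$ term.
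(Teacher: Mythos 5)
Your proposal is correct and follows essentially the same route as the paper: the trivial bound $\bigl|\int_{\mathfrak{m}(Q)}f(\alpha)h(-\alpha)\,d\alpha\bigr|\le \sup_{\mathfrak{m}(Q)}|f|\cdot\int_0^1|h|$, with the sup bound coming from Dirichlet approximation (forcing $Q<q\le g/Q$) plus Vinogradov's estimate, and the $L^1$ bound $\int_0^1|h(-\alpha)|\,d\alpha\ll(\log g)^2$ using $\gcd(c,d)=1$. The one remark worth making is that the step you dismiss as ``after changing variables'' is where the paper spends most of its effort (a case analysis over $\ell$ modulo $c$ and $d$ on short intervals of length $1/(cd)$), whereas the $q>Q$ verification you single out as delicate is comparatively routine; still, your identification of the needed statement and of the role of coprimality is accurate.
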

We need two lemmas listed below. 

\begin{lemma}\label{lem1} We have
\begin{align*}
\sup_{\alpha\in \mathfrak{m}(Q)}\left|f(\alpha)\right|\ll \frac{g(\log g)^4}{Q^{1/2}}+g^{4/5}(\log g)^4.
\end{align*}
\end{lemma}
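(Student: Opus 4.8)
The plan is to prove Lemma~\ref{lem1} by combining the classical Vinogradov--Vaughan estimate for exponential sums over primes with the Dirichlet approximation that is forced on us by the definition of the minor arcs. First I would recall Vaughan's identity, which decomposes $f(\alpha)=\sum_{0\le n\le g}\Lambda(n)e(\alpha n)$ into $O(1)$ sums of ``Type I'' and ``Type II'' shape, each of length at most $g$. The standard consequence (see e.g. Vaughan's book, or Davenport's \emph{Multiplicative Number Theory}) is that whenever $\alpha$ admits a rational approximation $|\alpha-a/q|\le 1/q^2$ with $(a,q)=1$, one has
\begin{align*}
f(\alpha)\ll (\log g)^4\left(\frac{g}{\sqrt q}+\sqrt{gq}+g^{4/5}\right).
\end{align*}
So the entire task reduces to showing that every $\alpha\in\mathfrak m(Q)$ has such an approximation with $q$ lying in the window $Q\le q\le g/Q$, whence both $g/\sqrt q$ and $\sqrt{gq}$ are $\ll g/\sqrt Q$ and the claimed bound follows.

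The second step is precisely that Dirichlet-approximation bookkeeping. By Dirichlet's theorem on Diophantine approximation, for the parameter $X=g/Q$ every real $\alpha$ has a reduced fraction $a/q$ with $1\le q\le X$ and $|\alpha-a/q|\le 1/(qX)=Q/(qg)$. I would then argue by contradiction: if such an $\alpha$ lay in $\mathfrak m(Q)$ but had its approximating denominator $q\le Q$, then $\alpha$ would satisfy $|\alpha-a/q|\le Q/(qg)$ with $1\le q\le Q$, which (after checking $a$ can be taken in $[1,q]$, using that $\mathfrak m(Q)\subseteq[(Q+1)/g,\,1+(Q+1)/g]$ keeps us away from $0$ and $1$) means $\alpha\in\mathfrak M(Q)$ — contradicting $\alpha\in\mathfrak m(Q)=[(Q+1)/g,1+(Q+1)/g]\setminus\mathfrak M(Q)$. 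Hence $Q<q\le g/Q$, and plugging into the Vaughan bound gives $f(\alpha)\ll (\log g)^4(g/\sqrt Q + g^{4/5})$ uniformly in $\alpha\in\mathfrak m(Q)$, which is Lemma~\ref{lem1}.

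The main obstacle, and the only place requiring genuine care rather than citation, is the interval bookkeeping at the endpoints: one must confirm that the major arcs as \emph{defined} in \eqref{eq2-1} (which use centers $a/q$ with $1\le a\le q$, rather than the more symmetric $0\le a<q$) really do capture \emph{every} $\alpha$ in $[(Q+1)/g,1+(Q+1)/g]$ whose Dirichlet denominator is $\le Q$. The subtlety is that Dirichlet's theorem naturally produces $a/q$ with $0\le a\le q$ or with $a$ possibly $0$; one has to translate by an integer (using that the whole problem is $1$-periodic and that $\mathfrak m(Q)$ sits just to the right of $0$) to bring the approximant into the prescribed range, and check the approximation radius $Q/(qg)$ is not exceeded in doing so. Once this is handled the rest is a mechanical invocation of known exponential-sum technology. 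A final minor point is simply to note $Q<(g/2)^{1/3}$, already assumed in the paper, which guarantees $Q<g/Q$ so the window $(Q,g/Q]$ is nonempty and the term $\sqrt{gq}\ll\sqrt{g\cdot g/Q}=g/\sqrt Q$ is indeed absorbed.
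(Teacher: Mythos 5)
Your proposal is correct and follows essentially the same route as the paper: Dirichlet approximation with parameter $g/Q$, a contradiction argument (including the endpoint bookkeeping for $a\le 0$ and $a\ge q+1$) showing every $\alpha\in\mathfrak m(Q)$ has its denominator in the window $Q<q\le g/Q$, and then the Vinogradov--Vaughan bound $f(\alpha)\ll(g q^{-1/2}+g^{4/5}+ g^{1/2}q^{1/2})(\log g)^4$, whose extreme terms are both $\ll g Q^{-1/2}$ in that window. No gaps to report.
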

\begin{proof}
By the Dirichlet approximation theorem (see e.g. \cite[Lemma 2.1]{Vaughan}), there exist $a\in \mathbb{Z}$ and $q\in \mathbb{Z}^+$ such that
$$(a,q)=1, \quad 1\le q\le \frac{g}{Q} \quad \text{and} \quad \left|\alpha-\frac{a}{q}\right|\le \frac{Q}{qg}$$
for any $m\in\mathfrak{m}(Q)$. First of all, we show that $q>Q$ for these $\alpha \in \mathfrak{m}(Q)$. Suppose the contrary, i.e., $q\le Q$, then from $\left|\alpha-\frac{a}{q}\right|\le \frac{Q}{qg}$
we know that 
$$\alpha\le \frac{a}{q}+\frac{Q}{qg}\le \frac{Q}{qg}\le \frac{Q}{g}<\frac{Q+1}{g} \quad  (\text{if } a\le 0)$$
and 
$$\alpha \ge \frac{q+1}{q}-\frac{Q}{qg}=1+\frac{1}{q}\left(1-\frac{Q}{g}\right)\ge 1+\frac{1}{2Q}>1+\frac{Q+1}{g} \quad  (\text{if } a\ge q+1),$$
which contradict with Eq. (\ref{eq2-2}). We are left over to consider the case that $1\le a\le q\le Q$. In this case, we have $\alpha\in \mathfrak{M}(Q)$ by the definition of the major arcs which is still a contradiction. Thus, we have proved that $q>Q$ for these $\alpha \in \mathfrak{m}(Q)$. 
We are in a position to introduce a fairly remarkable theorem of Vinogradov (see e.g. \cite[Theorem 3.1]{Vaughan}) which states that
for $(a,q)=1$ with $1\le q\le g$ and $\left|\alpha-\frac{a}{q}\right|\le \frac{1}{q^2}$, we have
\begin{align*}
f(\alpha)\ll \left(\frac{g}{q^{1/2}}+q^{4/5}+g^{1/2}q^{1/2}\right)(\log g)^4.
\end{align*}
Employing this estimate, we deduce from $Q<q\le \frac{g}{Q}$ that 
$$\sup_{\alpha\in \mathfrak{m}(Q)}\left|f(\alpha)\right|\ll \left(\frac{g}{Q^{1/2}}+g^{4/5}+g^{1/2}(g/Q)^{1/2}\right)(\log g)^4\ll \frac{g(\log g)^4}{Q^{1/2}}+g^{4/5}(\log g)^4.$$

This completes the proof of Lemma \ref{lem1}
\end{proof}

\begin{lemma}\label{lem2} We have
$$\int_{0}^{1}|h(-\alpha)|d\alpha\ll (\log g)^2.$$
\end{lemma}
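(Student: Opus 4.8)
The plan is to bound $h(-\alpha)$ pointwise by a product of two one-dimensional geometric-series factors and then integrate. Since
$$h(-\alpha)=\sum_{0\le x\le d}e(-\alpha cx)\cdot\sum_{0\le y\le c}e(-\alpha dy),$$
we have $|h(-\alpha)|=|G_d(\alpha c)|\,|G_c(\alpha d)|$ where $G_m(\beta):=\sum_{0\le k\le m}e(-\beta k)$ satisfies the standard estimate $|G_m(\beta)|\ll\min\bigl(m+1,\,\|\beta\|^{-1}\bigr)$, with $\|\cdot\|$ the distance to the nearest integer. By the Cauchy--Schwarz inequality,
$$\int_0^1|h(-\alpha)|\,d\alpha\le\Bigl(\int_0^1|G_d(\alpha c)|^2\,d\alpha\Bigr)^{1/2}\Bigl(\int_0^1|G_c(\alpha d)|^2\,d\alpha\Bigr)^{1/2}.$$

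For the second factor, the substitution $\beta=\alpha d$ followed by periodicity gives $\int_0^1|G_c(\alpha d)|^2\,d\alpha=\int_0^1|G_c(\beta)|^2\,d\beta=c+1$ by orthogonality. This crude approach loses too much, however: the first factor would contribute $d+1$, and since $cd$ can be as large as we like relative to $g$, the product $\sqrt{(c+1)(d+1)}$ is of order $\sqrt{cd}$, which is far larger than $(\log g)^2$. So the Cauchy--Schwarz splitting is the wrong move. Instead I would work directly with the pointwise bound $|h(-\alpha)|\ll\min(d,\|\alpha c\|^{-1})\cdot\min(c,\|\alpha d\|^{-1})$ and exploit the coprimality of $c$ and $d$ together with $cd\asymp g$. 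The key point is that as $\alpha$ ranges over $[0,1)$, the pair $(\|\alpha c\|,\|\alpha d\|)$ cannot both be small simultaneously except near rationals $a/q$ with small $q$: indeed if $\|\alpha c\|\le c/(2d)$ and $\|\alpha d\|$ is also small, one gets a rational approximation to $\alpha$ forcing a common structure. More efficiently, one writes $\alpha$ near $a/(cd)$ (the natural denominators here, since $c,d$ are coprime) and parametrizes: for $\alpha=a/(cd)+\theta$ with $a$ running over residues mod $cd$ and $|\theta|$ small, $\|\alpha c\|$ is governed by $a\bmod d$ and $\|\alpha d\|$ by $a\bmod c$, which by CRT run independently over $\Z/d$ and $\Z/c$.

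Concretely, I would dissect $[0,1)$ into intervals $I_a=[\,a/(cd)-1/(2cd),\ a/(cd)+1/(2cd)\,)$ for $0\le a<cd$, so that on $I_a$ we have $\|\alpha c\|$ within $1/(2d)$ of $\|(a/d)\|$ and similarly $\|\alpha d\|$ within $1/(2c)$ of $\|(a/c)\|$. Writing $a\equiv r\pmod d$ and $a\equiv s\pmod c$ with $(r,s)$ ranging over all of $\{0,\dots,d-1\}\times\{0,\dots,c-1\}$ by CRT, the contribution of $I_a$ is $\ll \frac1{cd}\cdot\min\!\bigl(d,\frac{d}{\|r/d\|\,d+1}\bigr)\cdot\min\!\bigl(c,\frac{c}{\|s/c\|\,c+1}\bigr)$, roughly $\frac1{cd}\cdot\frac{d}{r'+1}\cdot\frac{c}{s'+1}$ where $r'=\min(r,d-r)$, $s'=\min(s,c-s)$. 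Summing over $r,s$ gives $\ll\sum_{r'\le d}\frac1{r'+1}\sum_{s'\le c}\frac1{s'+1}\ll\log d\cdot\log c\ll(\log g)^2$, since $c,d\le g$. The main obstacle is getting the pointwise relation between $\|\alpha c\|$, $\|\alpha d\|$ and the residues of $a$ clean enough that the error terms in ``$\|\alpha c\|$ is close to $\|r/d\|$'' do not swamp the estimate when $r'$ or $s'$ is $0$ or $1$; this is handled by noting that on such $a$ the relevant factor is just $\ll d$ or $\ll c$ respectively, and these boundary terms contribute $O(1)$ each after summing, so the total is still $O((\log g)^2)$. Finally one checks $cd\ll g$ is not actually needed — only $c,d\le g+1$, which is immediate from $g=cd-c-d$ and $c,d\ge 2$ giving $c,d< g$.
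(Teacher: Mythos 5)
Your argument is correct and is essentially the paper's proof: after discarding the Cauchy--Schwarz attempt you use the same pointwise bound $|h(-\alpha)|\ll\min\{d,\Vert c\alpha\Vert^{-1}\}\min\{c,\Vert d\alpha\Vert^{-1}\}$, the same dissection of $[0,1)$ into intervals of length $1/(cd)$ centred at $a/(cd)$, and the same coprimality (CRT) decoupling of $a\bmod d$ and $a\bmod c$, ending with the product of two harmonic sums $\ll\log d\cdot\log c\ll(\log g)^2$. Your uniform bounds $\ll d/(r'+1)$ and $\ll c/(s'+1)$ neatly subsume the paper's separate treatment of small $\alpha$ and of the cases $c\mid\ell$, $d\mid\ell$, so no genuine gap remains.
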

\begin{proof}
Recall that for any real number $\alpha$ and integers $N_1<N_2$, we have
$$\sum_{n=N_1+1}^{N_2}e(\alpha n)\ll \min\{N_2-N_1, \Vert\alpha\Vert^{-1}\}$$
(see e.g. \cite[Lemma 4.7]{Nathanson}), where 
$\Vert\alpha\Vert=\min\{|\alpha-n|:n\in \mathbb{Z}\},$
from which it follows that
\begin{align*}
h(-\alpha)=\sum_{x\le d}e(-\alpha cx)\sum_{y\le c}e(-\alpha dy)\ll \min\{d, \Vert c\alpha\Vert^{-1}\}\min\{c, \Vert d\alpha\Vert^{-1}\}.
\end{align*}
The rest of the proof will be devoted to derive the following `elementary estimates' 
\begin{align}\label{eq3-1}
\int_{0}^{1}\min\{d, \Vert c\alpha\Vert^{-1}\}\min\{c, \Vert d\alpha\Vert^{-1}\}d\alpha\ll (\log g)^2,
\end{align}
which constitutes one of the the main workhorse of the whole article. We make the observation that it is equivalent to prove that
$$\int_{0}^{1/2}\min\{d, \Vert c\alpha\Vert^{-1}\}\min\{c, \Vert d\alpha\Vert^{-1}\}d\alpha\ll (\log g)^2.$$
For $0\le \alpha \le \frac{1}{cd}$, we have the following trivial estimates that
$$\int_{0}^{1/(cd)}\min\{d, \Vert c\alpha\Vert^{-1}\}\min\{c, \Vert d\alpha\Vert^{-1}\}d\alpha\le \int_{0}^{1/(cd)}dc~d\alpha\le 1.$$
For $\frac{1}{cd}\le \alpha\le \frac{1}{2\sqrt{cd}}$, it is plain that
$$\frac{1}{d}\le c\alpha \le \frac{1}{2}\sqrt{\frac{c}{d}}<\frac{1}{2}, \quad \text{i.e.,} \quad \Vert c\alpha\Vert=c\alpha,$$
from which we deduce that
$$\int_{\frac{1}{cd}}^{\frac{1}{2\sqrt{cd}}}\min\{d, \Vert c\alpha\Vert^{-1}\}\min\{c, \Vert d\alpha\Vert^{-1}\}d\alpha\le \int_{\frac{1}{cd}}^{\frac{1}{2\sqrt{cd}}}\frac{1}{c\alpha}c~d\alpha\ll \log (cd)\ll \log g.$$
It remains to prove that 
$$\int_{\frac{1}{2\sqrt{cd}}}^{1/2}\min\{d, \Vert c\alpha\Vert^{-1}\}\min\{c, \Vert d\alpha\Vert^{-1}\}d\alpha\ll (\log g)^2.$$ 
The above interval is contained in the following union of a few disjoint short intervals
$$\left[\frac{1}{2\sqrt{cd}}, \frac{1}{2}\right]\subseteq \bigcup_{\left\lfloor \frac{\sqrt{cd}}{2}\right\rfloor\le \ell\le \left\lfloor \frac{cd}{2}\right\rfloor}\left[\frac{\ell}{cd}-\frac{1}{2cd},\frac{\ell}{cd}+\frac{1}{2cd}\right].$$
By the above inclusion relation, it suffices to show that
$$\sum_{\left\lfloor \frac{\sqrt{cd}}{2}\right\rfloor\le \ell\le \left\lfloor \frac{cd}{2}\right\rfloor}\int_{\frac{\ell}{cd}-\frac{1}{2cd}}^{\frac{\ell}{cd}+\frac{1}{2cd}}\min\{d, \Vert c\alpha\Vert^{-1}\}\min\{c, \Vert d\alpha\Vert^{-1}\}d\alpha\ll (\log g)^2.$$
Making changes of the variables $\alpha=\frac{\ell}{cd}+\theta$, it is equivalent to prove that
\begin{align}\label{eq3-2}
\sum_{\left\lfloor \frac{\sqrt{cd}}{2}\right\rfloor\le \ell\le \left\lfloor \frac{cd}{2}\right\rfloor}\int_{-\frac{1}{2cd}}^{\frac{1}{2cd}}\min\left\{d, \left\Vert c\theta+\frac{\ell}{d}\right\Vert^{-1}\right\}\min\left\{c, \left\Vert d\theta+\frac{\ell}{c}\right\Vert^{-1}\right\}d\theta\ll (\log g)^2.
\end{align}
We separate the proof into four cases.

Case I. For $\left\lfloor \frac{\sqrt{cd}}{2}\right\rfloor\le \ell\le \left\lfloor \frac{cd}{2}\right\rfloor$ with $c\nmid \ell$ and $d\nmid \ell$, we have
$$\left\Vert c\theta+\frac{\ell}{d}\right\Vert\ge \frac{1}{2}\left\Vert\frac{\ell}{d}\right\Vert \quad \text{and} \quad \left\Vert d\theta+\frac{\ell}{c}\right\Vert\ge \frac{1}{2}\left\Vert\frac{\ell}{c}\right\Vert$$
since $|\theta|\le \frac{1}{2cd}$. Therefore, we obtain that
\begin{align}\label{eq3-3}
\sum_{\substack{\left\lfloor \frac{\sqrt{cd}}{2}\right\rfloor\le \ell\le \left\lfloor \frac{cd}{2}\right\rfloor\\ c\nmid\ell~\text{and}~d\nmid \ell}}\int_{-\frac{1}{2cd}}^{\frac{1}{2cd}}\min\left\{d, \left\Vert c\theta+\frac{\ell}{d}\right\Vert^{-1}\right\}&\min\left\{c, \left\Vert d\theta+\frac{\ell}{c}\right\Vert^{-1}\right\}d\theta\nonumber\\
&\le 4\sum_{\substack{\left\lfloor \frac{\sqrt{cd}}{2}\right\rfloor\le \ell\le \left\lfloor \frac{cd}{2}\right\rfloor\\ c\nmid\ell~\text{and}~d\nmid \ell}}\int_{-\frac{1}{2cd}}^{\frac{1}{2cd}}\left\Vert\frac{\ell}{d}\right\Vert^{-1}\left\Vert\frac{\ell}{c}\right\Vert^{-1}d\theta\nonumber\\
&=\frac{4}{cd}\sum_{\substack{\left\lfloor \frac{\sqrt{cd}}{2}\right\rfloor\le \ell\le \left\lfloor \frac{cd}{2}\right\rfloor\\ c\nmid\ell~\text{and}~d\nmid \ell}}\left\Vert\frac{\ell}{d}\right\Vert^{-1}\left\Vert\frac{\ell}{c}\right\Vert^{-1}.
\end{align}
Now, by the Euclidean division we can assume that $\ell=ch+r$ with $1\le r\le c-1$. It then follows that
\begin{align*}
\sum_{\substack{\left\lfloor \frac{\sqrt{cd}}{2}\right\rfloor\le \ell\le \left\lfloor \frac{cd}{2}\right\rfloor\\ c\nmid\ell~\text{and}~d\nmid \ell}}\left\Vert\frac{\ell}{d}\right\Vert^{-1}\left\Vert\frac{\ell}{c}\right\Vert^{-1}&\le \sum_{\substack{0\le h\le \frac{d}{2},~1\le r\le c-1\\ h\not\equiv -c^{-1}r\!\pmod{d}}}\left\Vert\frac{ch+r}{d}\right\Vert^{-1}\left\Vert\frac{r}{c}\right\Vert^{-1}\\
&=\sum_{1\le r\le c-1}\left\Vert\frac{r}{c}\right\Vert^{-1}\sum_{0\le h\le \frac{d}{2},~h\not\equiv -c^{-1}r\!\pmod{d}}\left\Vert\frac{ch+r}{d}\right\Vert^{-1},\\
&\le \sum_{1\le r\le c-1}\left\Vert\frac{r}{c}\right\Vert^{-1}\sum_{1\le r'\le d-1}\left\Vert\frac{r'}{d}\right\Vert^{-1},
\end{align*}
where $c^{-1}c\equiv 1\pmod{d}$ and the last inequality follows from the fact that
$$ch+r\not\equiv ch'+r \pmod{d} \quad (\text{for } 0\le h\neq h'\le d/2).$$
It can be seen that
$$\sum_{1\le r\le c-1}\left\Vert\frac{r}{c}\right\Vert^{-1}\le 2\sum_{1\le r\le \lfloor c/2\rfloor}\left\Vert\frac{r}{c}\right\Vert^{-1}=2\sum_{1\le r\le \lfloor c/2\rfloor}\frac{c}{r}\ll c\log c\ll c\log g$$
and
$$\sum_{1\le r'\le d-1}\left\Vert\frac{r'}{d}\right\Vert^{-1}\le 2\sum_{1\le r'\le \lfloor d/2\rfloor}\left\Vert\frac{r'}{d}\right\Vert^{-1}=2\sum_{1\le r'\le \lfloor d/2\rfloor}\frac{d}{r'}\ll d\log d\ll d\log g,$$
from which we deduce from Eq. (\ref{eq3-3}) that
\begin{align*}
\sum_{\substack{\left\lfloor \frac{\sqrt{cd}}{2}\right\rfloor\le \ell\le \left\lfloor \frac{cd}{2}\right\rfloor\\ c\nmid\ell~\text{and}~d\nmid \ell}}\int_{-\frac{1}{2cd}}^{\frac{1}{2cd}}\min\left\{d, \left\Vert c\theta+\frac{\ell}{d}\right\Vert^{-1}\right\}\min\left\{c, \left\Vert d\theta+\frac{\ell}{c}\right\Vert^{-1}\right\}d\theta\ll(\log g)^2.
\end{align*}

Case II. For $\left\lfloor \frac{\sqrt{cd}}{2}\right\rfloor\le \ell\le \left\lfloor \frac{cd}{2}\right\rfloor$ with $c\nmid \ell$ but $d|\ell$, we have
$$\left\Vert d\theta+\frac{\ell}{c}\right\Vert\ge \frac{1}{2}\left\Vert\frac{\ell}{c}\right\Vert$$
and then it follows that
\begin{align*}
\sum_{\substack{\left\lfloor \frac{\sqrt{cd}}{2}\right\rfloor\le \ell\le \left\lfloor \frac{cd}{2}\right\rfloor\\ c\nmid\ell~\text{and}~d| \ell}}\int_{-\frac{1}{2cd}}^{\frac{1}{2cd}}\min\left\{d, \left\Vert c\theta+\frac{\ell}{d}\right\Vert^{-1}\right\}&\min\left\{c, \left\Vert d\theta+\frac{\ell}{c}\right\Vert^{-1}\right\}d\theta\nonumber\\
&\le 2\sum_{\substack{\left\lfloor \frac{\sqrt{cd}}{2}\right\rfloor\le \ell\le \left\lfloor \frac{cd}{2}\right\rfloor\\ c\nmid\ell~\text{and}~d| \ell}}\int_{-\frac{1}{2cd}}^{\frac{1}{2cd}}d\left\Vert\frac{\ell}{c}\right\Vert^{-1}d\theta\nonumber\\
&=\frac{2}{c}\sum_{\substack{\left\lfloor \frac{\sqrt{cd}}{2}\right\rfloor\le \ell\le \left\lfloor \frac{cd}{2}\right\rfloor\\ c\nmid\ell~\text{and}~d| \ell}}\left\Vert\frac{\ell}{c}\right\Vert^{-1}.
\end{align*}
On writing $\ell=d\ell^*$, we find that
$$\sum_{\substack{\left\lfloor \frac{\sqrt{cd}}{2}\right\rfloor\le \ell\le \left\lfloor \frac{cd}{2}\right\rfloor\\ c\nmid\ell~\text{and}~d| \ell}}\left\Vert\frac{\ell}{c}\right\Vert^{-1}\le \sum_{\substack{1\le \ell^*\le \frac{c}{2}\\ c\nmid \ell^*}}\left\Vert\frac{d\ell^*}{c}\right\Vert^{-1}\le \sum_{\substack{1\le r\le c-1}}\left\Vert\frac{r}{c}\right\Vert^{-1}\ll c\log g,$$
from which it follows clearly that
\begin{align*}
 \sum_{\substack{\left\lfloor \frac{\sqrt{cd}}{2}\right\rfloor\le \ell\le \left\lfloor \frac{cd}{2}\right\rfloor\\ c\nmid\ell~\text{and}~d| \ell}}\int_{-\frac{1}{2cd}}^{\frac{1}{2cd}}\min\left\{d, \left\Vert c\theta+\frac{\ell}{d}\right\Vert^{-1}\right\}&\min\left\{c, \left\Vert d\theta+\frac{\ell}{c}\right\Vert^{-1}\right\}d\theta\ll \log g.
\end{align*}

Case III.  For $\left\lfloor \frac{\sqrt{cd}}{2}\right\rfloor\le \ell\le \left\lfloor \frac{cd}{2}\right\rfloor$ with $c| \ell$ but $d\nmid \ell$, we have
\begin{align*}
\sum_{\substack{\left\lfloor \frac{\sqrt{cd}}{2}\right\rfloor\le \ell\le \left\lfloor \frac{cd}{2}\right\rfloor\\ c|\ell~\text{and}~d\nmid \ell}}\int_{-\frac{1}{2cd}}^{\frac{1}{2cd}}\min\left\{d, \left\Vert c\theta+\frac{\ell}{d}\right\Vert^{-1}\right\}&\min\left\{c, \left\Vert d\theta+\frac{\ell}{c}\right\Vert^{-1}\right\}d\theta\ll \log g.
\end{align*}
via the same argument as Case II. 

Case IV. For $\left\lfloor \frac{\sqrt{cd}}{2}\right\rfloor\le \ell\le \left\lfloor \frac{cd}{2}\right\rfloor$ with $c| \ell$ and $d| \ell$, we have $cd|\ell$ since $(c,d)=1$, which is certainly a contradiction with $\ell\le \left\lfloor \frac{cd}{2}\right\rfloor$.

Gathering together Cases I to IV, we established Eq. (\ref{eq3-2}), and hence Eq. (\ref{eq3-1}). 

This completes the proof of Lemma \ref{lem2}.
\end{proof}

\begin{proof}[Proof of Proposition \ref{proposition1}]
The treatment of the minor arcs benefits from the following trivial estimates
\begin{align*}
\left|\int_{\mathfrak{m}(Q)}f(\alpha)h(-\alpha)d\alpha\right|\le \sup_{\alpha\in \mathfrak{m}(Q)}\left|f(\alpha)\right|\int_{0}^{1}|h(-\alpha)|d\alpha
\end{align*}
together with Lemma \ref{lem1} and  Lemma \ref{lem2}.
\end{proof}

\section{Calculations of the major arcs}
We would provide, in this section, the asymptotic formula of the integral on major arcs as the following proposition.

\begin{proposition}\label{proposition2} For $Q<c^{1/3}$, we have
\begin{align*}
\int_{\mathfrak{M}(Q)}f(\alpha)h(-\alpha)d\alpha=	\frac{g}{2}+O\left(\frac{g}{Q}(\log g)^2+gQ^2\exp\left(-\kappa_1\sqrt{\log g}\right)+dQ^3\right).
\end{align*}
\end{proposition}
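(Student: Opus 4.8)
The plan is to evaluate the major-arc integral in \eqref{eq2-4} by the standard route: on each arc around $a/q$ replace $f(a/q+\theta)$ by its expected main term and $h(-a/q-\theta)$ by an elementary expression, then sum over $a$ and $q$. First I would record the classical approximation to the exponential sum over primes on a major arc: for $(a,q)=1$ and $|\theta|\le Q/(qg)$,
\[
f\!\left(\frac{a}{q}+\theta\right)=\frac{\mu(q)}{\varphi(q)}\,v(\theta)+O\!\left(g\exp\!\left(-\kappa_1\sqrt{\log g}\right)\right),\qquad v(\theta)=\sum_{0\le n\le g}e(\theta n),
\]
which follows from the Siegel--Walfisz theorem together with partial summation (this is where $q\le Q<c^{1/3}$ is used, so that $q$ is a fixed power of $\log g$ at worst — in fact polynomially small — and the error is uniform). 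For $h$ I would write $h(-\alpha)=h_1(-\alpha)h_2(-\alpha)$ with $h_1(-\alpha)=\sum_{0\le x\le d}e(-\alpha cx)$ and $h_2(-\alpha)=\sum_{0\le y\le c}e(-\alpha dy)$, and evaluate each geometric sum at $\alpha=a/q+\theta$.

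The key structural point — the ``novel'' feature flagged in the introduction — is that only $q=1$ survives. For $q\ge 2$ I would bound the contribution crudely: using $|f|\le \sum_{n\le g}\Lambda(n)\ll g$ on the trivial side is too lossy, so instead I would use the main-term approximation above, so that the $q\ge2$ part is controlled by
\[
\sum_{2\le q\le Q}\frac{1}{\varphi(q)}\sum_{\substack{1\le a\le q\\(a,q)=1}}\int_{|\theta|\le Q/(qg)}|v(\theta)|\,|h(-a/q-\theta)|\,d\theta
\;+\;\text{(Siegel--Walfisz error)}.
\]
For the first piece I would argue that $|v(\theta)|\ll\min\{g,\|\theta\|^{-1}\}$ and that $|h(-a/q-\theta)|\ll\min\{d,\|c(a/q+\theta)\|^{-1}\}\min\{c,\|d(a/q+\theta)\|^{-1}\}$; since $q\ge2$ and $(a,q)=1$, at least one of $c a/q$, $d a/q$ is not close to an integer unless $q\mid c$ or $q\mid d$, and even then the coprimality $(c,d)=1$ prevents both from being integral. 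Summing the resulting arithmetic factors over $a\bmod q$ and integrating over $\theta$, one gets a bound of size $O\big(\tfrac{g}{Q}(\log g)^2 + dQ^3\big)$ after accounting for the exceptional $q\mid cd$ terms (the $dQ^3$ absorbs the worst case where $h$ is as large as $dc$ on a short $\theta$-range for $O(Q)$ values of $a$ and $O(Q)$ moduli). The Siegel--Walfisz error contributes $\sum_{q\le Q}\varphi(q)\cdot \tfrac{Q}{qg}\cdot g\exp(-\kappa_1\sqrt{\log g})\ll Q^2\exp(-\kappa_1\sqrt{\log g})\cdot$ (a factor $g$ from the combined arc length times height), giving the $gQ^2\exp(-\kappa_1\sqrt{\log g})$ term.

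It remains to extract the main term from $q=1$, i.e.\ from $\int_{|\theta|\le Q/g}v(\theta)\,h(-\theta)\,d\theta$. Here I would note that $v(\theta)h(-\theta)=\sum_{0\le n\le g}\sum_{0\le x\le d,\,0\le y\le c}e(\theta(n-cx-dy))$, so that extending the $\theta$-integral to all of $[0,1]$ would count exactly the pairs $(n;x,y)$ with $n=cx+dy\le g$, $0\le x\le d$, $0\le y\le c$; by Sylvester's theorem this representation is unique when it exists, and the number of $n\in[0,g]$ representable as $cx+dy$ with $x,y\ge0$ is $(g+1)/2$ rounded appropriately, hence $\sim g/2$. (One must check that the box constraints $x\le d$, $y\le c$ are not binding for $n\le g=cd-c-d$, which is immediate.) Then I would bound the tail $\int_{Q/g<|\theta|\le 1/2}v(\theta)h(-\theta)\,d\theta$: using $|v(\theta)|\ll\|\theta\|^{-1}\le g/Q$ on this range and Lemma~\ref{lem2}'s integral bound $\int_0^1|h(-\theta)|\,d\theta\ll(\log g)^2$ gives $O\big(\tfrac{g}{Q}(\log g)^2\big)$. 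Collecting the $q=1$ main term $g/2+O(1)$, its tail error, the $q\ge2$ bound, and the Siegel--Walfisz error yields exactly the stated estimate. The main obstacle is the bookkeeping for $q\ge2$: one has to verify carefully that the arithmetic sums $\sum_{a\bmod q}\min\{d,\|ca/q\|^{-1}\}\min\{c,\|da/q\|^{-1}\}$ stay small even when $q$ shares a factor with $c$ or $d$, and that the contribution of those exceptional moduli is genuinely absorbed by $dQ^3$ rather than something larger like $cdQ^2$; pinning down the right power of $Q$ and the right dependence on $d$ is the delicate part.
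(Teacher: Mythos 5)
Your treatment of the $q=1$ arc is essentially the paper's: replace $f(\theta)$ by $v(\theta)=\sum_{0\le n\le g}e(n\theta)$ via the prime number theorem with classical error, complete the integral to a full period, invoke Sylvester's theorem to identify the count $(g+1)/2$, and bound the tail by $\|\theta\|^{-1}\le g/Q$ times $\int_0^1|h|\ll(\log g)^2$ from Lemma \ref{lem2}; that part is sound. The genuine gap is in your treatment of $2\le q\le Q$. You reject the trivial bound $|f|\ll g$ as ``too lossy'' and instead invoke the approximation $f(a/q+\theta)=\frac{\mu(q)}{\varphi(q)}v(\theta)+O\left(g\exp\left(-\kappa_1\sqrt{\log g}\right)\right)$, justified by Siegel--Walfisz ``since $q\le Q<c^{1/3}$ is a fixed power of $\log g$ at worst''. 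That justification is false: the proposition is asserted for every $Q<c^{1/3}$, and since $c$ can be as large as roughly $\sqrt{g}$, the moduli $q$ may be as large as a power of $g$ (about $g^{1/6}$), far outside the range $q\le(\log g)^A$ in which Siegel--Walfisz gives an error $O\left(g\exp\left(-\kappa\sqrt{\log g}\right)\right)$ uniformly. No unconditional substitute exists in that range, so your main-term expansion of $f$ on the $q\ge2$ arcs is unsupported, and with it the claimed bound $O\left(\frac{g}{Q}(\log g)^2+dQ^3\right)$ for those arcs, whose bookkeeping you in any case leave open (``pinning down the right power of $Q$ and the right dependence on $d$ is the delicate part'').

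The point you missed is that no information about $f$ beyond the trivial $|f|\ll g$ is needed there: as in Lemma \ref{lem4-3}, for $q\ge2$, $(a,q)=1$, $|\theta|\le Q/(qg)$ and $Q<c^{1/3}$, coprimality of $c$ and $d$ guarantees $q\nmid c$ or $q\nmid d$, whence $\left\Vert c\left(\frac{a}{q}+\theta\right)\right\Vert\ge\frac1{2q}$ or $\left\Vert d\left(\frac{a}{q}+\theta\right)\right\Vert\ge\frac1{2q}$, so $|h(-a/q-\theta)|\ll qd$; multiplying by $|f|\ll g$, integrating over arcs of length $2Q/(qg)$ and summing over $a$ and $q$ gives $\ll dQ^3$ directly. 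So the correct mechanism is smallness of $h$ on the arcs with $q\ge2$ (this is exactly the sense in which only the first coefficient of the singular series contributes), not an asymptotic expansion of $f$ there; as written, your proof of the $q\ge2$ estimate does not go through.
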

To this aim, we firstly prove several lemmas.

\begin{lemma}\label{lem4-1}
For any real number $\theta$, we have
\begin{align*}
f(\theta)=\sum_{0\le n\le g}e(n\theta)+O\left(g(1+|\theta|g)\exp\left(-\kappa_1\sqrt{\log g}\right)\right).
\end{align*}
\end{lemma}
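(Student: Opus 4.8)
The plan is to replace $\Lambda(n)$ by $1$ up to an admissible error, using the prime number theorem in the sharpest known form (i.e. the error term coming from the zero-free region). First I would recall that, by partial summation, $f(\theta)=\sum_{0\le n\le g}\Lambda(n)e(n\theta)$ can be written as a Riemann--Stieltjes integral against $\psi(t)=\sum_{n\le t}\Lambda(n)$, namely
\begin{align*}
f(\theta)=\int_{0^-}^{g}e(t\theta)\,d\psi(t).
\end{align*}
Likewise $\sum_{0\le n\le g}e(n\theta)=\int_{0^-}^{g}e(t\theta)\,d\lfloor t\rfloor$, and since $\lfloor t\rfloor$ differs from the ``expected'' counting function $t$ only by a bounded quantity, the natural comparison is between $d\psi(t)$ and $dt$. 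So I would write $\psi(t)=t+R(t)$ with $R(t)\ll t\exp(-\kappa_1\sqrt{\log t})$ supplied by the Siegel--Walfisz / PNT error term (valid for $t$ large, hence for $c$ hence $g$ sufficiently large), and correspondingly $\sum_{0\le n\le g}e(n\theta)=\int_{0^-}^g e(t\theta)\,dt+O(1)$ after an integration by parts to compare $\lfloor t\rfloor$ with $t$.

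Next I would carry out the integration by parts on the error piece:
\begin{align*}
f(\theta)-\int_{0^-}^g e(t\theta)\,d(t)=\int_{0^-}^{g}e(t\theta)\,dR(t)=e(g\theta)R(g)-2\pi i\theta\int_{0}^{g}e(t\theta)R(t)\,dt+O(1),
\end{align*}
where the $O(1)$ absorbs the discrepancy between $\int e(t\theta)\,dt$ and $\sum e(n\theta)$ as well as the behaviour at $t$ near $0$ (where $\psi$ is bounded). Then I would bound the two resulting terms trivially: $|e(g\theta)R(g)|\ll g\exp(-\kappa_1\sqrt{\log g})$, and
\begin{align*}
\left|2\pi i\theta\int_{0}^{g}e(t\theta)R(t)\,dt\right|\ll |\theta|\int_{0}^{g} t\exp(-\kappa_1\sqrt{\log t})\,dt\ll |\theta| g^2\exp(-\kappa_1\sqrt{\log g}),
\end{align*}
using monotonicity of $t\exp(-\kappa_1\sqrt{\log t})$ for large $t$ (shrinking the constant $\kappa_1$ slightly if needed to handle the small-$t$ range and to swallow the polynomial factor from the integration). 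Collecting these gives exactly $f(\theta)=\sum_{0\le n\le g}e(n\theta)+O\bigl(g(1+|\theta|g)\exp(-\kappa_1\sqrt{\log g})\bigr)$, which is the claim; one should note the constant $\kappa_1$ here may be taken to be the same as (or any positive constant below) the one in the underlying PNT error term.

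The only genuinely delicate point is the bookkeeping at the lower endpoint and the replacement of the integral $\int_0^g e(t\theta)\,dt$ by the sum $\sum_{0\le n\le g} e(n\theta)$: one must check that this replacement costs only $O(1)$ uniformly in $\theta$, which follows because $\bigl|\sum_{0\le n\le g}e(n\theta)-\int_0^g e(t\theta)\,dt\bigr|\le 1+\int_0^g |e((n)\theta)-e(t\theta)|\ldots$ — more cleanly, by writing the sum as a Stieltjes integral against $\lfloor t\rfloor$ and integrating by parts against $\{t\}$, whose contribution is $O(1+|\theta|)\,$, and $|\theta|\le 1$ on the range of interest, or else is absorbed into the stated error since $|\theta|\le |\theta| g$ trivially. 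This is routine but deserves a careful line. Everything else is a direct trivial estimation once the PNT error term is invoked, so I do not expect any real obstacle beyond choosing $\kappa_1$ consistently.
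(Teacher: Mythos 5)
Your proposal rests on the same core mechanism as the paper's proof --- partial summation against the prime number theorem with the classical error term $\ll t\exp\left(-\kappa_1\sqrt{\log t}\right)$ --- but you route it through a comparison of $\psi(t)$ with the continuous function $t$, which forces the extra step of replacing $\sum_{0\le n\le g}e(n\theta)$ by $\int_0^g e(t\theta)\,dt$. The paper sidesteps this entirely by setting $\rho_n=\Lambda(n)-1$ and performing partial summation on $\sum_{0\le n\le g}\rho_n e(n\theta)$ directly, so that only the bound $\sum_{n\le t}\rho_n\ll t\exp\left(-\kappa_1\sqrt{\log t}\right)$ is needed and no sum-versus-integral bookkeeping arises. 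That extra step is exactly where your write-up is inaccurate: the replacement does \emph{not} cost $O(1)$, nor $O(1+|\theta|)$, uniformly in $\theta$ (the lemma is asserted for all real $\theta$; at $\theta=1$ the sum equals $g+1$ while the integral is $O(1)$). The correct trivial bound, coming from the term $2\pi i\theta\int_0^g\{t\}e(t\theta)\,dt$ after integrating by parts against $\{t\}$, is $O(1+|\theta|g)$. Fortunately this is still absorbed by the stated error, since $g\exp\left(-\kappa_1\sqrt{\log g}\right)\gg 1$ gives $1+|\theta|g\ll g(1+|\theta|g)\exp\left(-\kappa_1\sqrt{\log g}\right)$, which is essentially the absorption you gesture at in your final sentence; with that bound stated correctly (and with the error term attributed to the classical de la Vall\'ee Poussin form of the prime number theorem rather than to Siegel--Walfisz, which concerns progressions) your argument closes. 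So the flaw is cosmetic rather than fatal, but the paper's choice of $\rho_n$ is the cleaner route, and your remaining estimates --- $|R(g)|\ll g\exp\left(-\kappa_1\sqrt{\log g}\right)$ and $|\theta|\int_0^g t\exp\left(-\kappa_1\sqrt{\log t}\right)dt\ll|\theta|g^2\exp\left(-\kappa_1\sqrt{\log g}\right)$ --- match the paper's.
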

\begin{proof}
Let  $\rho_n=\Lambda(n)-1$. Then for any real number $\theta$, we have
\begin{align}\label{eq4-1}
f(\theta)-\sum_{0\le n\le g}e(n\theta)=\sum_{0\le n\le g}\rho_ne(n\theta).
\end{align}
Integrating by parts, we have
\begin{align}\label{eq4-2}
\sum_{0\le n\le g}\rho_ne(n\theta)=e(g\theta)\sum_{0\le n\le g}\rho_n-2\pi i\theta\int_{0}^{g}\left(\sum_{0\le n\le t}\rho_n\right)e(t\theta)dt.
\end{align}
Employing the elaborate form of the prime number theorem (see e.g. \cite[Lemma 3.1]{Vaughan}), we get 
$$\sum_{0\le n\le t}\rho_n\ll t\exp\left(-\kappa_1\sqrt{\log t}\right),$$
where $\kappa_1>0$ is an absolute constant. Inserting the above estimates into Eqs. (\ref{eq4-1}) and (\ref{eq4-2}), we obtain that for any $\theta$,
\begin{align*}
f(\theta)=\sum_{0\le n\le g}e(n\theta)+O\left(g(1+|\theta|g)\exp\left(-\kappa_1\sqrt{\log g}\right)\right).
\end{align*}

This completes the proof of Lemma \ref{lem4-1}.
\end{proof}

\begin{lemma}\label{lem4-2}
\begin{align*}
\int_{|\theta|\le \frac{Q}{g}}f\left(\theta\right)h\left(-\theta\right)d\theta=\frac{g}{2}+O\left(\frac{g}{Q}(\log g)^2+gQ^2\exp\left(-\kappa_1\sqrt{\log g}\right)\right).
\end{align*}
\end{lemma}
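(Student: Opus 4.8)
The plan is to replace $f(\theta)$ by the pure exponential sum $\sum_{0\le n\le g}e(n\theta)$ by means of Lemma \ref{lem4-1}, then complete the truncated integral to the full circle and evaluate it by orthogonality together with Sylvester's antisymmetry property recalled in the introduction; the discarded pieces will be controlled by the bound $\int_0^1|h(-\alpha)|\,d\alpha\ll(\log g)^2$ of Lemma \ref{lem2}.

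First I would linearize. On the range $|\theta|\le Q/g$ one has $1+|\theta|g\le 1+Q\ll Q$, so Lemma \ref{lem4-1} gives $f(\theta)=\sum_{0\le n\le g}e(n\theta)+O\bigl(gQ\exp(-\kappa_1\sqrt{\log g})\bigr)$ uniformly there. Multiplying by $h(-\theta)$ and integrating, the error contributes
\[
\ll gQ\exp(-\kappa_1\sqrt{\log g})\int_{|\theta|\le Q/g}|h(-\theta)|\,d\theta\le gQ\exp(-\kappa_1\sqrt{\log g})\int_0^1|h(-\alpha)|\,d\alpha\ll gQ(\log g)^2\exp(-\kappa_1\sqrt{\log g}),
\]
which is absorbed into the stated term $gQ^2\exp(-\kappa_1\sqrt{\log g})$ (the logarithmic factors being swallowed by the exponential, at worst after a harmless shrinking of $\kappa_1$). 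It therefore remains to evaluate $\int_{|\theta|\le Q/g}\bigl(\sum_{0\le n\le g}e(n\theta)\bigr)h(-\theta)\,d\theta$.

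Identifying the circle with $[-\tfrac12,\tfrac12]$, I would split this as $\int_{-1/2}^{1/2}-\int_{Q/g<|\theta|\le 1/2}$. For the tail one has $\bigl|\sum_{0\le n\le g}e(n\theta)\bigr|\le|\sin\pi\theta|^{-1}\ll|\theta|^{-1}\le g/Q$ on $Q/g<|\theta|\le\tfrac12$, so that part is $\ll (g/Q)\int_0^1|h(-\alpha)|\,d\alpha\ll (g/Q)(\log g)^2$ by Lemma \ref{lem2}; this is the first error term. For the completed integral, orthogonality gives $\int_{-1/2}^{1/2}\bigl(\sum_{0\le n\le g}e(n\theta)\bigr)h(-\theta)\,d\theta=\sum_{0\le n\le g}r(n)$, where $r(n)=\#\{(x,y):0\le x\le d,\ 0\le y\le c,\ cx+dy=n\}$. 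The crucial elementary claim is that for $0\le n\le g$ one has $r(n)=1$ when $n$ is representable as $cx+dy$ with $x,y\in\mathbb Z_{\geqslant0}$ and $r(n)=0$ otherwise: since $(c,d)=1$ the integer solutions of $cx+dy=n$ form a single orbit under $(x,y)\mapsto(x+d,y-c)$, so there is a unique representative with $0\le x\le d-1$; for it $y=(n-cx)/d\le n/d\le g/d<c$, and comparing with any nonnegative solution shows $y\ge 0$ as soon as one exists; finally the box $[0,d]\times[0,c]$ cannot contain a second representative (the value $x=d$ already forces $y<0$ as $n<cd$), so $r(n)\le 1$ always. Hence $\sum_{0\le n\le g}r(n)$ equals the number of $n\in[0,g]$ representable by $cx+dy$ with $x,y\ge 0$, which by Sylvester's theorem is exactly $(g+1)/2$ (note $g$ is odd since $g+1=(c-1)(d-1)$). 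Collecting the three contributions,
\[
\int_{|\theta|\le Q/g}f(\theta)h(-\theta)\,d\theta=\frac{g+1}{2}+O\!\left(\frac gQ(\log g)^2+gQ^2\exp(-\kappa_1\sqrt{\log g})\right)=\frac g2+O\!\left(\frac gQ(\log g)^2+gQ^2\exp(-\kappa_1\sqrt{\log g})\right),
\]
the $O(1)$ discrepancy $(g+1)/2-g/2$ being absorbed since $g/Q\to\infty$ under $Q<(g/2)^{1/3}$.

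The only genuinely delicate point is the arithmetic assertion above: that for $0\le n\le g$ the nonnegative representation of $n$, when it exists, is unique and automatically lies inside the truncated box $0\le x\le d$, $0\le y\le c$ that appears in $h$. This is exactly what ensures that completing the exponential-sum integral reproduces precisely Sylvester's count $(g+1)/2$ rather than some larger quantity; everything else (the linearization from Lemma \ref{lem4-1} and the two appeals to the $L^1$ bound of Lemma \ref{lem2}) is routine.
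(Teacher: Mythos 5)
Your proposal is correct and follows essentially the same route as the paper: linearize $f$ via Lemma \ref{lem4-1}, complete the truncated integral to a full period, evaluate the completed integral by orthogonality together with Sylvester's theorem to obtain $(g+1)/2$, and bound the tails by $\Vert\theta\Vert^{-1}\le g/Q$ combined with the $L^1$ bound of Lemma \ref{lem2}. The only minor differences are that you control the linearization error with the $L^1$ bound on $h$ (the paper uses the trivial bound $|h|\ll g$ to land exactly on $gQ^2\exp(-\kappa_1\sqrt{\log g})$, whereas your version needs a harmless shrinking of $\kappa_1$), and that you spell out the unique-representation-in-the-box argument which the paper leaves implicit when invoking Sylvester's count.
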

\begin{proof}
From Lemma \ref{lem4-1}, we have
\begin{align*}
\int_{|\theta|\le \frac{Q}{g}}f\left(\theta\right)h\left(-\theta\right)d\theta&=\int_{|\theta|\le \frac{Q}{g}}\sum_{0\le n\le g}e(n\theta)\sum_{\substack{0\le x\le d\\ 0\le y\le c}}e(-\theta (cx+dy))d\theta+\mathcal{R}(\theta),
\end{align*}
where the error term $\mathcal{R}(\theta)$ can be bounded easily by the trivial estimates as
\begin{align*}
\mathcal{R}(\theta)&\ll \int_{|\theta|\le \frac{Q}{g}}g(1+|\theta|g)\exp\left(-\kappa_1\sqrt{\log g}\right)|h(-\theta)|~d\theta\\
&\ll\int_{|\theta|\le \frac{Q}{g}}g(1+|\theta|g)\exp\left(-\kappa_1\sqrt{\log g}\right)g ~d\theta\\
&\ll g^2Q\exp\left(-\kappa_1\sqrt{\log g}\right)\int_{|\theta|\le \frac{Q}{g}}1d\theta\\
&\ll gQ^2\exp\left(-\kappa_1\sqrt{\log g}\right).
\end{align*}
On noting that
$$\int_{|\theta|\le \frac{Q}{g}}=\int_{-1/2}^{1/2}-\int_{\frac{Q}{g}}^{1/2}-\int_{-1/2}^{-\frac{Q}{g}},$$
we can obtain
\begin{align*}
\int_{|\theta|\le \frac{Q}{g}}\sum_{0\le n\le g}e(n\theta)\sum_{\substack{0\le x\le d\\ 0\le y\le c}}e(-\theta (cx+dy))d\theta=\int_{-1/2}^{1/2}\sum_{0\le n\le g}e(n\theta)\sum_{\substack{0\le x\le d\\ 0\le y\le c}}e(-\theta (cx+dy))d\theta\\
+\mathcal{R}_1(\theta)+\mathcal{R}_2(\theta),
\end{align*}
where 
\begin{align*}
\mathcal{R}_1(\theta)=\int_{-1/2}^{-\frac{Q}{g}}\sum_{0\le n\le g}e(n\theta)\sum_{\substack{0\le x\le d\\ 0\le y\le c}}e(-\theta (cx+dy))d\theta
\end{align*}
and 
\begin{align*}
\mathcal{R}_2(\theta)=\int_{\frac{Q}{g}}^{1/2}\sum_{0\le n\le g}e(n\theta)\sum_{\substack{0\le x\le d\\ 0\le y\le c}}e(-\theta (cx+dy))d\theta.
\end{align*}
It follows from the periodic property of $\theta$ that
\begin{align*}
\int_{-1/2}^{1/2}\sum_{0\le n\le g}e(n\theta)\sum_{\substack{0\le x\le d\\ 0\le y\le c}}e(-\theta (cx+dy))d\theta&=\sum_{\substack{0\le n\le g\\0\le x\le d,~ 0\le y\le c}}\int_{0}^{1}e((n-cx-dy)\theta)d\theta.
\end{align*}
Here comes the most interesting point of the whole proof! (The Hardy--Littlewood method reduces the diophantine Frobenius problem with prime variable to the original diophantine Frobenius problem.) By the orthogonality relation, the above integral equals $1$ if $n=cx+dy$ and $0$ otherwise. Hence, by the result of Sylvester \cite{Sylvester} as we mentioned in the introduction (see also \cite[Eq. (2) of the Preface]{RA}), we get
\begin{align*}
\sum_{\substack{0\le n\le g\\0\le x\le d,~ 0\le y\le c}}\int_{0}^{1}e((n-cx-dy)\theta)d\theta=\frac{g+1}{2}.
\end{align*}
For $\frac{Q}{g}\le \theta\le \frac12$, using again the following estimates
$$\sum_{0\le n\le g}e(n\theta)\ll \min\{g,\Vert \theta\Vert^{-1}\}\ll \Vert \theta\Vert^{-1}=\frac1{\theta}$$
and 
$$\sum_{0\le x\le d}e(-\theta cx)\ll\min\{d,\Vert c\theta\Vert^{-1}\}, \quad \sum_{0\le x\le c}e(-\theta dx)\ll\min\{c,\Vert d\theta\Vert^{-1}\},$$
we deduce that
\begin{align*}
\mathcal{R}_2(\theta)&\ll\int_{\frac{Q}{g}}^{1/2}\frac1{\theta}\min\{d,\Vert c\theta\Vert^{-1}\}\min\{c,\Vert d\theta\Vert^{-1}\}d\theta\\
&\le \frac{g}{Q}\int_{\frac{Q}{g}}^{1/2}\min\{d,\Vert c\theta\Vert^{-1}\}\min\{c,\Vert d\theta\Vert^{-1}\}d\theta\\
&\le \frac{g}{Q}(\log g)^2,
\end{align*}
where the last inequality follows from Eq. (\ref{eq3-1}). The same argument would also lead to the estimate $\mathcal{R}_1(\theta)\ll  \frac{g}{Q}(\log g)^2$.
Collecting the estimates above, we obtain that
\begin{align*}
\int_{|\theta|\le \frac{Q}{g}}f\left(\theta\right)h\left(-\theta\right)d\theta=\frac{g}{2}+O\left(\frac{g}{Q}(\log g)^2+gQ^2\exp\left(-\kappa_1\sqrt{\log g}\right)\right).
\end{align*}

This completes the proof of Lemma \ref{lem4-2}.
\end{proof}

\begin{lemma}\label{lem4-3}
For $Q<c^{1/3}$, we have
$$\sum_{2\le q\le Q}\sum_{\substack{1\le a\le q\\ (a,q)=1}}\int_{|\theta|\le \frac{Q}{qg}}f\left(\frac{a}{q}+\theta\right)h\left(-\frac{a}{q}-\theta\right)d\theta\ll dQ^3.$$
\end{lemma}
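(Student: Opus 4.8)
The plan is to estimate each integral $\int_{|\theta|\le Q/(qg)}f(a/q+\theta)h(-a/q-\theta)\,d\theta$ with $2\le q\le Q$ by inserting a Siegel--Walfisz expansion of $f$ on the arc around $a/q$, by exploiting that the pure exponential sum $h$ is genuinely small near a rational point of denominator $q\ge2$, and then summing over $a$ and over $q$.

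First I would record the two ingredients. \emph{(a) Size of $f$ near $a/q$:} for $(a,q)=1$ and $|\theta|\le Q/(qg)$ we have $g|\theta|\le Q/q\le Q$, and (for the value of $Q$ eventually chosen, a fixed power of $\log g$) the modulus $q\le Q$ lies in the Siegel--Walfisz range, so running the argument of Lemma~\ref{lem4-1} with the Siegel--Walfisz theorem in place of the prime number theorem gives
\[
\Bigl|f\Bigl(\tfrac aq+\theta\Bigr)\Bigr|\ll\frac{1}{\phi(q)}\min\{g,\Vert\theta\Vert^{-1}\}+E,\qquad E\ll Q^{2}g\exp\!\bigl(-\kappa_{1}\sqrt{\log g}\bigr)+Q^{2}(\log g)^{2}.
\]
\emph{(b) Size of $h$ near $a/q$:} write $h(-a/q-\theta)=S_1S_2$ with $S_1=\sum_{0\le x\le d}e(-(a/q+\theta)cx)$ and $S_2=\sum_{0\le y\le c}e(-(a/q+\theta)dy)$; since $(c,d)=1$ and $q\ge2$, at most one of $q\mid c$, $q\mid d$ holds. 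If $q\nmid c$ then $\Vert ca/q\Vert\ge(q,c)/q$ while $|c\theta|\le\tfrac12\Vert ca/q\Vert$ (as $c$ is large), so $|S_1|\ll q/(q,c)\le q$; if $q\mid c$ then $S_1$ is independent of $a$ and (as $|c\theta|<\tfrac12$) $|S_1|\ll\min\{d,|c\theta|^{-1}\}$; the same holds for $S_2$ with $d$ in place of $c$. Hence $|h(-a/q-\theta)|\ll q^{2}$ if $q\nmid c$ and $q\nmid d$; $\ll q\min\{d,|c\theta|^{-1}\}$ if $q\mid c$; and $\ll q\min\{c,|d\theta|^{-1}\}$ if $q\mid d$.

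I would then split $2\le q\le Q$ into these three cases and bound $\int_{|\theta|\le Q/(qg)}|f(a/q+\theta)||h(-a/q-\theta)|\,d\theta$ using the elementary estimates (got by cutting the $\theta$-range at $\Vert\theta\Vert=1/g$ and $\Vert\theta\Vert=1/(cd)$, just as in the proof of Lemma~\ref{lem2})
\[
\int_{|\theta|\le Q/(qg)}\min\{g,\Vert\theta\Vert^{-1}\}\,d\theta\ll\log g,\qquad \int_{|\theta|\le Q/(qg)}\min\{g,\Vert\theta\Vert^{-1}\}\min\{d,|c\theta|^{-1}\}\,d\theta\ll\frac{g}{c},
\]
and the same with $c,d$ interchanged, together with the cruder $\int_{|\theta|\le Q/(qg)}|h(-a/q-\theta)|\,d\theta\ll qQ/g$, $q(\log g)/c$, $q(\log g)/d$ in the respective cases. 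Splitting $|f|$ as in (a), the ``main'' piece (built from $\tfrac1{\phi(q)}\min\{g,\Vert\theta\Vert^{-1}\}$) contributes, after summing over the $\phi(q)$ admissible $a$ and then over $q$ and using $\sum_{q\le Q}q^{2}\ll Q^{3}$, $\sum_{q\le Q}q\ll Q^{2}$, $g/c<d$, $g/d<c<d$ and $\log g\ll d$,
\[
\ll\log g\sum_{q\le Q}q^{2}+\frac{g}{c}\sum_{q\le Q}q+\frac{g}{d}\sum_{q\le Q}q\ll Q^{3}\log g+dQ^{2}\ll dQ^{3}.
\]

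The $E$-piece contributes $E$ times $\sum_{2\le q\le Q}\sum_{(a,q)=1}\int_{|\theta|\le Q/(qg)}|h(-a/q-\theta)|\,d\theta$, and here one must \emph{not} bound this last sum by $\int_0^1|h|\ll(\log g)^{2}$ (Lemma~\ref{lem2}): the factor $\exp(-\kappa_1\sqrt{\log g})$ in $E$ cannot absorb a stray $c$, and $c$ may be as large as $\asymp\sqrt g$. Instead, summing the case-by-case bounds above gives $\sum_{q,a}\int_{|\theta|\le Q/(qg)}|h|\ll Q^{4}/g+Q^{3}(\log g)/c\ll Q^{3}(\log g)/c$, so the $E$-piece is
\[
\ll E\cdot\frac{Q^{3}\log g}{c}\ll dQ^{5}(\log g)\exp\!\bigl(-\kappa_{1}\sqrt{\log g}\bigr)+\frac{Q^{5}(\log g)^{3}}{c}\ll dQ^{3}
\]
as soon as $Q$ is a power of $\log g$ with $Q^{2}(\log g)\exp(-\kappa_1\sqrt{\log g})\to0$ and $Q^{2}(\log g)^{3}\ll g$; adding the two pieces gives the lemma. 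The one genuinely non-routine point, and the main obstacle, is exactly this last step: one must use that $h$ is localized and small on every arc $|\theta|\le Q/(qg)$ with $q\ge2$ (each integral of $|h|$ there gaining a factor $1/c$, $1/d$ or $1/g$), and one must track the powers of $q$ so that the $q$ or $q^{2}$ from $S_1S_2$ is absorbed by $1/\phi(q)$ from $f$ and by $\sum_{q\le Q}q\ll Q^{2}$, $\sum_{q\le Q}q^{2}\ll Q^{3}$.
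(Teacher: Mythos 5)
Your argument does not prove the lemma as stated: it only covers $Q$ of size at most a fixed power of $\log g$, whereas the statement assumes only $Q<c^{1/3}$. The Siegel--Walfisz expansion in your step (a) is valid only for moduli $q\le(\log g)^{A}$ with $A$ fixed, and your final absorption of the $E$-piece explicitly needs $Q^{2}(\log g)\exp(-\kappa_{1}\sqrt{\log g})\ll1$; but $c$ can be of order $g^{1/2}$, so the hypothesis $Q<c^{1/3}$ allows $Q$ as large as a power of $g$, where both of these steps fail. Since the paper later chooses $Q=(\log g)^{14}$, your restricted version would still be enough for Theorem \ref{thm2}, but as a proof of Lemma \ref{lem4-3} itself there is a genuine gap. (Within the restricted range your computations look essentially sound: the localized bounds on $h$ near $a/q$, the $1/\phi(q)$ saving, and the trichotomy $q\nmid cd$, $q\mid c$, $q\mid d$ are all correct.)

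The more important point is that none of this machinery is needed, and what you single out as ``the main obstacle'' is an artifact of expanding $f$ at all. The paper's proof uses only the trivial bound $|f(\alpha)|\le\sum_{n\le g}\Lambda(n)\ll g$ together with the one observation about $h$ you already made: since $(a,q)=1$, $q\ge2$ and $(c,d)=1$, the modulus $q$ cannot divide both $c$ and $d$, so at least one of $\left\Vert c\left(\frac aq+\theta\right)\right\Vert$, $\left\Vert d\left(\frac aq+\theta\right)\right\Vert$ is $\ge\frac1{2q}$ for $|\theta|\le\frac{Q}{qg}$ (here $Q<c^{1/3}$ makes $|c\theta|$ and $|d\theta|$ small enough), whence $\left|h\left(-\frac aq-\theta\right)\right|\ll qd$ on every such arc, the other factor of $h$ being bounded trivially by $d$. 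Each arc then contributes $\ll g\cdot qd\cdot\frac{Q}{qg}\ll dQ$, and summing over the at most $q$ admissible $a$ and over $2\le q\le Q$ gives $dQ\sum_{q\le Q}q\ll dQ^{3}$, with no appeal to Siegel--Walfisz, to the $L^{1}$ bound of Lemma \ref{lem2}, or to any extra restriction on $Q$. In short: because the arcs are short and $h$ already loses a factor $g/(qd)$ there, the crude bound $|f|\ll g$ suffices, and no gain of $1/c$, $1/d$ or $1/g$ from the integral of $|h|$ needs to be tracked.
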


\begin{proof}
Recall that
\begin{align*}
	h(-\alpha)=\sum_{x\le d}e(-\alpha cx)\sum_{y\le c}e(-\alpha dy)\ll \min\{d, ||c\alpha||^{-1}\}\min\{c, ||d\alpha||^{-1}\},
\end{align*}
hence we have
\begin{align*}
h\left(-\frac{a}{q}-\theta\right)\ll \min\left\{d, \left\Vert c\left(-\frac{a}{q}-\theta\right)\right\Vert^{-1}\right\}\min\left\{c, \left\Vert d\left(-\frac{a}{q}-\theta\right)\right\Vert^{-1}\right\}.
\end{align*}
Since $(a,q)=1$, we would have
\begin{align*}
 \left\Vert c\left(-\frac{a}{q}-\theta\right)\right\Vert\ge \frac{1}{2q} \quad \text{if} \quad q\nmid c
\end{align*}
and
\begin{align*}
\left\Vert d\left(-\frac{a}{q}-\theta\right)\right\Vert\ge \frac{1}{2q} \quad \text{if} \quad q\nmid d
\end{align*}
for $q\ge 2$ and $|\theta|\le \frac{Q}{qg}$, provided that $Q\le c^{1/3}$. Recall that $(c,d)=1$, thus at least one of the above inequalities is admissible. Therefore, for all $2\le q\le Q$, $(a,q)=1$ and $|\theta|\le \frac{Q}{qg}$ we have
\begin{align*}
h\left(-\frac{a}{q}-\theta\right)\ll qd,
\end{align*}
from which we can conclude that
\begin{align*}
\sum_{2\le q\le Q}\sum_{\substack{1\le a\le q\\ (a,q)=1}}\int_{|\theta|\le \frac{Q}{qg}}f\left(\frac{a}{q}+\theta\right)h\left(-\frac{a}{q}-\theta\right)d\theta
\ll\sum_{2\le q\le Q}\sum_{\substack{1\le a\le q\\ (a,q)=1}}\int_{|\theta|\le \frac{Q}{qg}} gdq~ d\theta\ll dQ^3.
\end{align*}

This completes the proof of Lemma \ref{lem4-3}.
\end{proof}

\begin{proof}[Proof of Proposition \ref{proposition2}]
Let's turn back to Eq. (\ref{eq2-4}):
$$\int_{\mathfrak{M}(Q)}f(\alpha)h(-\alpha)d\alpha=\sum_{1\le q\le Q}\sum_{\substack{1\le a\le q\\ (a,q)=1}}\int_{|\theta|\le \frac{Q}{qg}}f\left(\frac{a}{q}+\theta\right)h\left(-\frac{a}{q}-\theta\right)d\theta.$$
The term of the above sum for $a=q=1$ equals 
\begin{align*}
\int_{|\theta|\le \frac{Q}{g}}f\left(1+\theta\right)h\left(-1-\theta\right)d\theta=\int_{|\theta|\le \frac{Q}{g}}f\left(\theta\right)h\left(-\theta\right)d\theta.
\end{align*} 
Now, our proposition follows immediately from Lemma \ref{lem4-2} and lemma \ref{lem4-3}.
\end{proof}

\section{Proofs of Theorem \ref{thm1}, Theorem \ref{thm2} and Corollary \ref{corollary1}}
\begin{proof}[Proof of Theorem \ref{thm2}]
By Eqs. (\ref{eq2-3}), Proposition \ref{proposition1} and Proposition \ref{proposition2}, we have
$$\psi_{c,d}=\frac{g}{2}+O\left(\frac{g}{Q}(\log g)^2+gQ^2\exp\left(-\kappa_1\sqrt{\log g}\right)+dQ^3+\frac{g(\log g)^6}{Q^{1/2}}+g^{4/5}(\log g)^6\right),$$
where $Q\le c^{1/3}$. We choose $Q=(\log g)^{14}$. Then 
\begin{align}\label{eq5-1}
\psi_{c,d}=\frac{g}{2}+O\left(\frac{g}{\log g}\right),
\end{align}
provided that $c\ge (\log g)^{43}$. 

For the complement of our proof, it remains to consider the case that $c\le (\log g)^{43}$.
Following the proof of \cite[Page 299]{Ding}, we have
\begin{align}\label{eq5-2}
	\psi_{c,d}&=\sum_{\substack{n=cx+dy\\n\leqslant g\\x,y\in \mathbb{Z}_{\geqslant0}}}\Lambda(n)\nonumber\\
	&=\sum_{\substack{1\le y\leqslant c\\(y,c)=1}}\sum_{\substack{n\equiv dy\!\!\pmod{c}\\dy\leqslant n\leqslant g}}\Lambda(n)+O\left(1\right)\nonumber\\
	&=\sum_{\substack{1\le y\leqslant c\\(y,c)=1}}\left(\psi(g;c,dy)-\psi(dy;c,dy)\right)+O\left(1\right)\nonumber\\
	&=\psi(g)-\sum_{\substack{1\le y\leqslant c\\(y,c)=1}}\psi(dy;c,dy)+O\left(1\right).
\end{align}
Now, for $c\le (\log g)^{43}\ll (\log d)^{43}$, by the Siegel--Walfisz theorem we have
\begin{align*}
	\sum_{ \substack{1\le y\leqslant c\\(y,c)=1}}\psi(dy;c,dy)&=\sum_{\substack{1\le y\leqslant c\\(y,c)=1}}\left(\frac{dy}{\varphi(c)}+O\left(dy\exp(-\kappa_2\sqrt{\log g})\right)\right)\\
	&=\frac{1}{2}cd+O\left(g\exp\left(-\kappa_3\sqrt{\log g})\right)\right)\\
	&=\frac{1}{2}g+O\left(\frac{g}{c}+g\exp\left(-\kappa_3\sqrt{\log g})\right)\right),
\end{align*}
where $\kappa_2$ and $\kappa_3$ are two positive integers with $\kappa_3<\kappa_2$. Since 
$$\psi(g)=g+O\left(g\exp\left(-\kappa_4\sqrt{\log g})\right)\right)$$
by the prime number theorem, we finally conclude from Eq. (\ref{eq5-2}) that
\begin{align}\label{eq5-3}
		\psi_{c,d}=\frac{1}{2}g+O\left(\frac{g}{c}+g\exp\left(-\kappa_5\sqrt{\log g})\right)\right)
\end{align}	
for $c\le (\log g)^{43}$, where $\kappa_5=\min\{\kappa_3,\kappa_4\}$.

From Eqs. (\ref{eq5-1}) and (\ref{eq5-3}), we proved that
$$\psi_{c,d}\sim \frac12 g, \quad \text{as~}c\rightarrow\infty.$$
This completes the proof of Theorem \ref{thm2}.
\end{proof}

\begin{proof}[Proof of Theorem \ref{thm1}]
For $t\leqslant g$, let
$$\vartheta_{a,b}(t)=\sum_{\substack{p=ax+by\\p\leqslant t\\x,y\in \mathbb{Z}_{\geqslant0}}}\log p \quad \text{and} \quad \vartheta_{a,b}=\vartheta_{a,b}(g).$$
Integrating by parts, we obtain that
\begin{align}\label{E1}
	\pi_{a,b}=\sum_{\substack{p=ax+by\\p\leqslant g\\x,y\in \mathbb{Z}_{\geqslant0}}}1=\frac{\vartheta_{a,b}}{\log g}+\int_{2}^{g}\frac{\vartheta_{a,b}(t)}{t\log ^2t}dt.
\end{align}
 By the Chebyshev estimate, we have
 $$\vartheta_{a,b}(t)\leqslant\sum_{p\leqslant t}\log p\ll t,$$  
 from which it follows that
\begin{equation}\label{E2}
	\int_{2}^{g}\frac{\vartheta_{a,b}(t)}{t\log ^2t}dt\ll\int_{2}^{g}
	\frac{1}{\log^2 t}dt\ll\frac{g}{(\log g)^2}.
\end{equation}
Again, using the Chebyshev estimate, we have
\begin{equation}\label{E3}
	\vartheta_{a,b}=\psi_{a,b}+O(\sqrt{g}).
\end{equation}
Thus, by Theorem \ref{thm2} and Eqs. (\ref{E1}), (\ref{E2}), (\ref{E3}), we conclude that
\begin{equation*}
	\pi_{a,b}=\frac{\psi_{a,b}}{\log g}+O\left(\frac{\sqrt{g}}{\log g}+\frac{g}{(\log g)^2}\right)\sim \frac12 \pi(g),
\end{equation*}
as $c\rightarrow\infty$.
This completes the proof of Theorem \ref{thm1}.
\end{proof}

\begin{proof}[Proof of Corollary \ref{corollary1}]
	It follows clearly from Theorem \ref{thm2}, Eq. (\ref{eq5-2}) and the prime number theorem.
\end{proof}

\section*{Acknowledgments}
The first named author is supported by National Natural Science Foundation of China  (Grant No. 12201544), Natural Science Foundation of Jiangsu Province, China (Grant No. BK20210784), China Postdoctoral Science Foundation (Grant No. 2022M710121), the foundations of the projects ``Jiangsu Provincial Double--Innovation Doctor Program'' (Grant No. JSSCBS20211023) and ``Golden  Phoenix of the Green City--Yang Zhou'' to excellent PhD (Grant No. YZLYJF2020PHD051).

The second named author is support by the National Natural Science Foundation of China (Grant No. 11971476).

The third named author is support by the National Key Research and Development Program of China (Grant No. 2021YFA1000700) and National Natural Science Foundation of China (Grant No. 11922113).

\end{document}